\def\MR#1{\href{http://www.ams.org/mathscinet-getitem?mr=#1}{MR#1}}
\newtheorem{theorem}{Theorem}[section]
\newtheorem{proposition}[theorem]{Proposition}
\newtheorem{lemma}[theorem]{Lemma}
\newtheorem{corollary}[theorem]{Corollary}
\theoremstyle{definition}
\numberwithin{figure}{section}
\numberwithin{equation}{section}
\numberwithin{table}{section}
\newcommand{\N}{\mathbb{N}}
\newcommand{\Z}{\mathbb{Z}}
\newcommand{\R}{\mathbb{R}}
\newcommand{\C}{\mathbb{C}}
\newcommand{\F}{\mathbb{F}}
\newcommand{\A}{\mathbb{A}}
\newcommand{\supp}{\operatorname{supp}}
\newcommand{\GL}{{\rm GL}}
\newcommand{\PGL}{{\rm PGL}}
\newcommand{\SL}{{\rm SL}}
\newcommand{\Fix}{{\rm Fix}}
\newcommand{\Hom}{{\rm Hom}}
\newcommand{\Aut}{{\rm Aut}}
\newcommand{\Out}{{\rm Out}}
\newcommand{\rot}{{\rm rot}}
\DeclareMathOperator{\tr}{tr}
\newcommand{\legendre}[2]{\genfrac{(}{)}{}{}{#1}{#2}}
\title{Kesten-McKay law for the Markoff surface mod $p$}
\author{Matthew de Courcy-Ireland}
\address{Department of Mathematics\\
Princeton University\\
Princeton, NJ, 08544, USA} \email{mdc4@math.princeton.edu}
\author{Michael Magee}
\address{Department of Mathematical Sciences \\
Durham University \\
Durham, DH1 3LE, UK} \email{michael.r.magee@durham.ac.uk}
\date{October 31, 2018}
\begin{document}

\begin{abstract}
For each prime $p$, we study the eigenvalues of a 3-regular graph on roughly $p^2$ vertices constructed from the Markoff surface. 
We show they asymptotically follow the Kesten-McKay law, which also describes the eigenvalues of a random regular graph. The proof is based on the method of moments and takes advantage of a natural group action on the Markoff surface.
\end{abstract}

\maketitle

\section{Introduction} \label{sec:introduction}

The Kesten-McKay Law governs the eigenvalue distribution of a random $d$-regular graph in the limit of a growing number of vertices \cite{M}, \cite{K}. 
The limiting probability density function is
\begin{equation}
\rho_d(\lambda) = \frac{d}{2\pi} \frac{ \sqrt{4(d-1) - \lambda^2} }{d^2 - \lambda^2} \mathbbm{1}_{[-2\sqrt{d-1},2\sqrt{d-1}]}(\lambda)
\end{equation}
This spectral density comes from the Plancherel measure on the infinite $d$-regular tree, and one might expect a similar eigenvalue distribution for non-random $d$-regular graphs provided they resemble their universal cover closely enough in the sense of having few short cycles.
The purpose of this article is to establish such a result for a family of 3-regular graphs constructed from the \emph{Markoff equation}
\begin{equation} \label{eqn:markoff}
 x^2+y^2+z^2=xyz
\end{equation}
modulo large prime numbers $p \rightarrow \infty$. The vertices, roughly $p^2$ in number, are simply the solutions $(x,y,z)$ in $\F_p^3$ excluding $(0,0,0)$. The edges connect $(x,y,z)$ to $(x,y,xy-z)$, $(x, xz-y, z)$, and $(yz-x,y,z)$, the Markoff equation being preserved by these operations.
We will write $M(\F_p)$ for the vertex set and $\mathfrak{M}(\F_p)$ for the graph.
The eigenvalues $\{ \lambda_j \}$ of the resulting graph can naturally be thought of as a measure on $[-3,3]$, namely
\begin{equation} \label{eqn:empirical}
\mu_p = \frac{1}{|M(\F_p)|} \sum_j \delta_{\lambda_j}
\end{equation}
and our main result is that the moments of this measure converge as $p \rightarrow \infty$ to those of the Kesten-McKay measure.
\begin{theorem}
There is an absolute $C > 1$ such that, with an implicit constant independent of both $p$ and $L$,
\[
\int x^L d\mu_p = \int x^L \rho_3(x) dx + O\left( \frac{ C^L }{p} \right).  
\]
\label{thm:logp}
\end{theorem}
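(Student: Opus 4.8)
The plan is to convert the moments into a count of closed walks and to exploit the action of the free product $W=\Z/2*\Z/2*\Z/2=\langle r_1,r_2,r_3\mid r_i^2=1\rangle$ on $M(\F_p)$ by the Vieta involutions $R_1,R_2,R_3$. Writing $A$ for the adjacency operator of $\mathfrak M(\F_p)$, one has $\int x^L\,d\mu_p=|M(\F_p)|^{-1}\tr(A^L)$; and since $A=P_1+P_2+P_3$, where $P_i$ is the permutation operator of the involution $R_i$, expanding the power and taking the trace gives
\[
\tr(A^L)=\sum_{(i_1,\dots,i_L)\in\{1,2,3\}^L}\#\bigl\{\,v\in M(\F_p):R_{i_L}\cdots R_{i_1}(v)=v\,\bigr\}.
\]
The permutation $R_{i_L}\cdots R_{i_1}$ depends only on the image $w$ of the word in $W$, so I would regroup this as $\sum_{w\in W}n_L(w)\,|\Fix(w)|$, where $n_L(w)$ counts the length-$L$ words representing $w$ and $\Fix(w)\subseteq M(\F_p)$ is the common fixed-point set. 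The term $w=e$ contributes $n_L(e)\,|M(\F_p)|$, and $n_L(e)$ is exactly the number of closed walks of length $L$ at a vertex of the Cayley graph of $W$ with respect to $\{r_i\}$, i.e.\ of the $3$-regular tree, so $n_L(e)=\int x^L\rho_3(x)\,dx$. It then remains to show $\sum_{w\neq e}n_L(w)\,|\Fix(w)|=O(C^Lp)$ and divide by $|M(\F_p)|=p^2+O(p)$.

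For the error sum I would bound $|\Fix(w)|$ for each nontrivial $w$ by realizing $\Fix(w)$ as a low-degree curve on the Markoff surface $X$. Two ingredients are needed. First, a degree bound: each $R_i$ is a polynomial automorphism of $\A^3$ of degree $2$, so a word of length $\ell$ acts by polynomials of degree $\le 2^\ell$ in each coordinate, whence the fixed subscheme $\Fix_X(w)=X\cap\{w=\mathrm{id}\}$ has degree $\le 2^{O(\ell)}$. Second --- and this is the crux --- for $w\neq e$ in $W$ the subscheme $\Fix_X(w)$ is a \emph{proper} closed subvariety of the geometrically irreducible surface $X_{\overline{\F_p}}$, hence of dimension $\le 1$; combined with the elementary bound $\#\{\F_p\text{-points of a curve of degree }d\text{ in }\A^n\}=O_d(p)$, this gives $|\Fix(w)|\le 2^{O(\ell)}(p+1)$ with $\ell=|w|\le L$. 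Summing, $\sum_{w\neq e}n_L(w)\,|\Fix(w)|\le 2^{O(L)}(p+1)\sum_w n_L(w)=2^{O(L)}\cdot 3^L\cdot(p+1)=O(C^Lp)$. For the (small) primes where the resulting bound $O(C^L/p)$ is weaker than the trivial estimate $|\int x^L\,d\mu_p-\int x^L\rho_3|\le 2\cdot 3^L$, there is nothing to prove.

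The main obstacle is the crux above: that a word nontrivial in the abstract free product acts nontrivially on $X_{\overline{\F_p}}$, \emph{uniformly in $p$} --- equivalently, that $r_i\mapsto R_i$ defines an embedding $\Z/2*\Z/2*\Z/2\hookrightarrow\Aut(X_{\overline{\F_p}})$ for every $p$. A coefficient-size argument would only exclude relations of length $\lesssim\log\log p$, far short of what the stated range requires, so I would instead establish the free-product structure geometrically and characteristic-freely: the involutions $R_i$ act as isometries of the N\'eron--Severi lattice of a smooth projective model of $X$ (a lattice independent of $p$), they generate a discrete group of loxodromic type, and a ping-pong argument in the associated hyperbolic space yields faithfulness of $W$ --- this is the content of the classification of automorphisms of Markoff-type cubic surfaces. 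This is exactly where the natural group action on the Markoff surface does the essential work: the whole error analysis rests on that action being free up to conjugacy already over $\overline{\F_p}$. It remains only to record the routine facts that $|M(\F_p)|=p^2+O(p)$ (Lang--Weil, or the exact count for the Markoff surface) and that the loops and multiple edges of $\mathfrak M(\F_p)$ are unproblematic, coming from the $w\in\{r_1,r_2,r_3\}$ terms whose fixed loci are curves and hence already absorbed into the error.
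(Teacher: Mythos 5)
Your proposal agrees with the paper's overall architecture: write $\int x^L\,d\mu_p = |M(\F_p)|^{-1}\tr(A^L)$, use $A=P_1+P_2+P_3$ to expand $\tr(A^L)=\sum_w n_L(w)\,|\Fix(w)|$ over $w$ in the free product, observe that $n_L(e)$ is exactly the Kesten--McKay moment, and then bound $|\Fix(w)|$ for $w\neq e$. Where you differ is the mechanism for the last bound.

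You propose a soft algebraic-geometry argument: $w$ acts by polynomials of degree $\le 2^{|w|}$, so $\Fix_X(w)$ is a closed subscheme of bounded degree; since it is proper in the irreducible surface $X$ it has dimension $\le 1$; and a degree-$d$ curve has $O(dp)$ points over $\F_p$, giving $|\Fix(w)|\le 2^{O(L)}\,p$ and hence the theorem (with, incidentally, a far smaller $C$ than the paper's $2^{17}$). The paper instead passes through the Fricke--Klein identification $G\cong\langle m_1,m_2,m_3\rangle\le\PGL_2(\Z)$, bounds fixed points in terms of the $\GL_2(\Z)$ matrix entries (Theorem~\ref{thm:fixed-points}, via a power lemma and a case analysis into generic / torsion / parabolic, using Lemma~3.9 of \cite{CGMP}), and then uses that the entries of a length-$L$ word grow only like $4^L$ (Proposition~\ref{prop:bound-entries}). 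The single-exponential growth of \emph{matrix} entries is precisely what lets the paper sidestep your ``crux'': a nontrivial word gives a nonzero integer matrix whose entries are $\le 4^L\ll p$ in the relevant range, so freeness over $\overline{\F}_p$ is never needed for the main theorem. You correctly diagnose that a direct coefficient-size argument on the \emph{polynomial} representation fails because those coefficients grow doubly exponentially --- that is exactly the phenomenon the paper circumvents by linearizing via $\GL_2(\Z)$.

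The gap in your version is therefore real and concentrated in exactly the place you flag. You must prove that $\Z/2*\Z/2*\Z/2\hookrightarrow\Aut(X_{\overline{\F}_p})$ is injective for every $p$. Your proposed route --- the Vieta involutions as isometries of the N\'eron--Severi lattice of a smooth projective model, plus ping-pong --- is not routine here: the $k=0$ surface is singular at the origin so one must work with a resolution, and the N\'eron--Severi lattice can jump in positive characteristic, so ``a lattice independent of $p$'' and the loxodromic classification both require justification before ping-pong applies. You should instead use the argument the paper gives for Proposition~\ref{prop:katz}: the group $\overline{G}$ preserves every level set $x^2+y^2+z^2=xyz+k$, and on the Cayley cubic $k=4$ the action is simultaneously diagonalized by $x=\xi+\xi^{-1}$, $y=\eta+\eta^{-1}$, $z_\pm=\xi\eta^{\pm 1}+(\xi\eta^{\pm 1})^{-1}$, so a word acting as the $\GL_2(\Z)$ matrix $\begin{bmatrix}a&b\\c&d\end{bmatrix}$ sends $(\xi,\eta)\mapsto(\xi^a\eta^c,\xi^b\eta^d)$ on this torus; triviality then forces $a=d=\pm1$, $b=c=0$ by evaluating on roots of unity of arbitrary order prime to $p$, and the residual possibility is $m_3$ itself, which visibly acts nontrivially. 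That proof is elementary, uniform in $p$, and drops directly into your curve-counting argument. You would also want to record explicitly that $X_{\overline{\F}_p}$ is geometrically irreducible (the point count $p^2\pm3p$ already implies it) and that the constant in your $O_d(p)$ is linear in $d$ (project the curve to a line; at most $d$ bad fibers contribute $p+O(1)$ each and the remaining $p$ fibers contribute at most $d$ each), so that the final error is genuinely $O(C^Lp)$ with an absolute $C$.
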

Thus one can take $L$ to be a small multiple of $\log{p}$ and the error term $C^L/p$ will remain negligible.
Our proof of Theorem~\ref{thm:logp} permits $C = 2^{17} = 131072$, which we have not optimized, but an exponential dependence on $L$ is inevitable. As we will explain at the end of the paper, the moments no longer agree if $L/\log{p}$ is too large.

Taking linear combinations and applying Theorem~\ref{thm:logp} for $L$ fixed as $p \rightarrow \infty$, we obtain
\begin{theorem}
For any fixed polynomial $f$, the eigenvalues $\lambda_j$ of the Markoff graph mod $p$ satisfy
\[
\frac{1}{p^2\pm 3p} \sum_{j} f(\lambda_j) = \int_{-2\sqrt{2}}^{2\sqrt{2}} f(\lambda) \rho_3(\lambda) d\lambda + O\left( \frac{1}{p} \right)
\]
as $p \rightarrow \infty$.
\label{thm:boundedL}
\end{theorem}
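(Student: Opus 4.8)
The plan is to obtain Theorem~\ref{thm:boundedL} as an immediate consequence of Theorem~\ref{thm:logp} together with the exact vertex count. Write the fixed polynomial as $f(x) = \sum_{L=0}^{d} a_L x^{L}$, where $d = \deg f$ and the coefficients $a_L$ are independent of $p$. Since the measure $\mu_p$ of \eqref{eqn:empirical} is a probability measure, one has
\[
\frac{1}{|M(\F_p)|}\sum_{j} f(\lambda_j) \;=\; \int f\,d\mu_p \;=\; \sum_{L=0}^{d} a_L \int x^{L}\,d\mu_p .
\]
I would then apply Theorem~\ref{thm:logp} to each of the finitely many terms on the right, obtaining
\[
\sum_{L=0}^{d} a_L\!\left( \int x^{L}\rho_3(x)\,dx + O\!\left(\frac{C^{L}}{p}\right)\right) = \int f(x)\,\rho_3(x)\,dx + O\!\left(\frac{1}{p}\sum_{L=0}^{d} |a_L|\,C^{L}\right),
\]
and since $f$ is fixed, $\sum_{L\le d}|a_L|C^{L}$ is a constant depending only on $f$, so the error is $O_f(1/p)$. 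Finally, because $\rho_3$ is supported on $[-2\sqrt{2},2\sqrt{2}] = [-2\sqrt{3-1},2\sqrt{3-1}]$, the integral $\int f\rho_3$ equals $\int_{-2\sqrt2}^{2\sqrt2} f(\lambda)\rho_3(\lambda)\,d\lambda$, which is exactly the right-hand side of the claimed identity.

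The remaining ingredient is to replace $|M(\F_p)|$ by the explicit value $p^{2}\pm 3p$ appearing in the statement. This is the classical count of nonzero solutions of \eqref{eqn:markoff} over $\F_p$: a short character-sum computation gives $|M(\F_p)| = p^{2} + 3\left(\tfrac{-1}{p}\right)p$, i.e. $p^{2}+3p$ when $p\equiv 1 \pmod{4}$ and $p^{2}-3p$ when $p\equiv 3\pmod{4}$. I would record (or simply invoke) this at the outset, so that the denominator in \eqref{eqn:empirical} is known exactly and the $\pm$ in the statement is accounted for.

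I do not expect a genuine obstacle here: all of the analytic content is already packaged in Theorem~\ref{thm:logp}, and for a fixed polynomial one does not even need the uniform exponential control $C^{L}$ — convergence of each of the finitely many moments with an $O_L(1/p)$ rate would suffice. The only point requiring care is bookkeeping: Theorem~\ref{thm:logp} normalizes $\mu_p$ by $|M(\F_p)|$, so one must pin down that count as above before the statement with the $p^{2}\pm 3p$ normalization is literally correct.
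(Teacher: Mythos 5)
Your proposal is correct and matches the paper's own (very brief) argument: the paper deduces Theorem~\ref{thm:boundedL} precisely by taking linear combinations of Theorem~\ref{thm:logp} for the finitely many fixed exponents $L \le \deg f$, with the vertex count $|M(\F_p)| = p^2 + 3\legendre{-1}{p}p$ (the Carlitz count minus the origin) supplying the $p^2 \pm 3p$ normalization. Your bookkeeping of the error term and the support of $\rho_3$ is exactly the intended routine verification.
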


\begin{figure}[t] 
\centering
\hfill
\subfigure[$p=83$]{\includegraphics[width=0.48\textwidth]{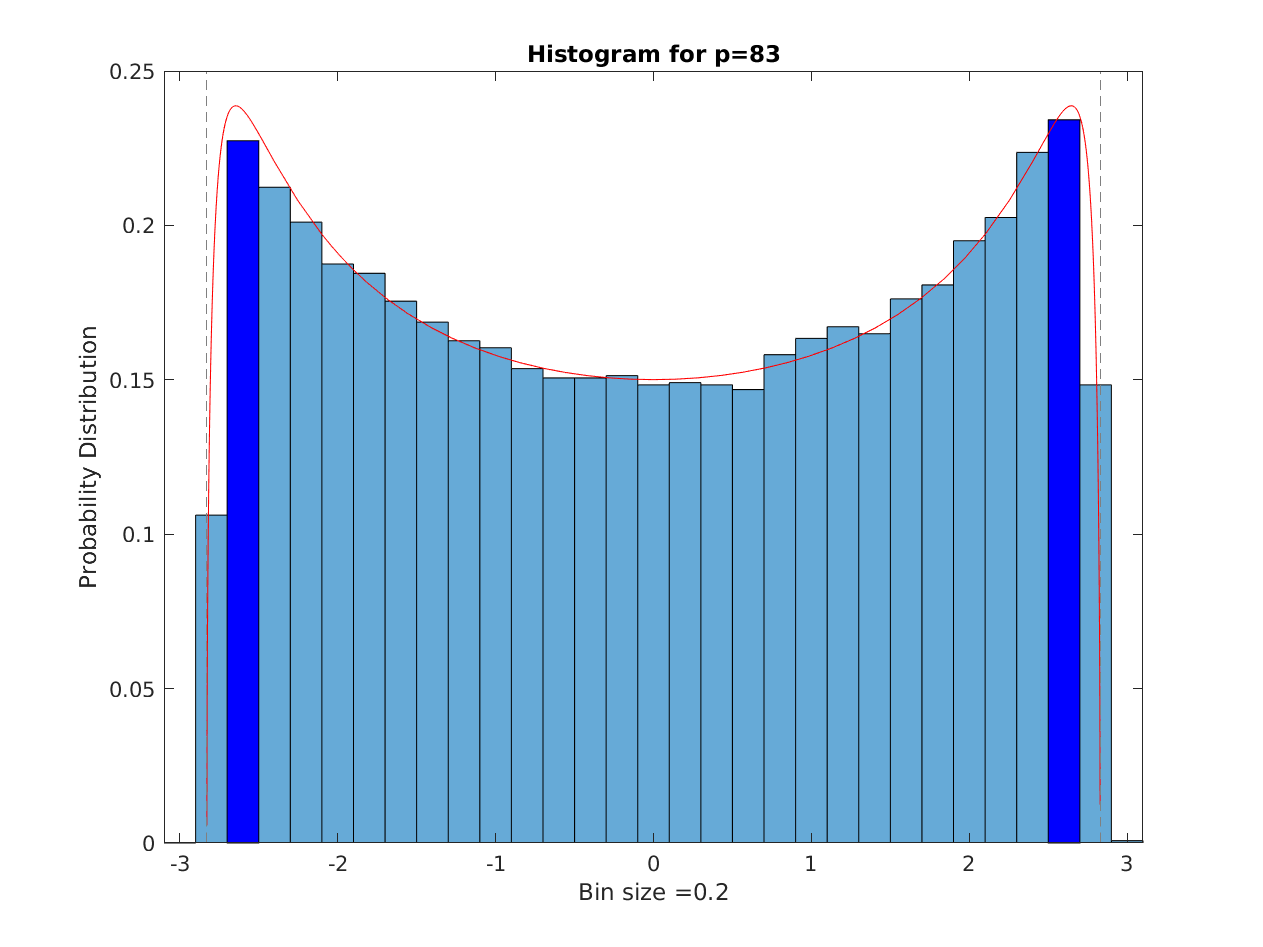}}
\hfill
\subfigure[$p=89$]{\includegraphics[width=0.48\textwidth]{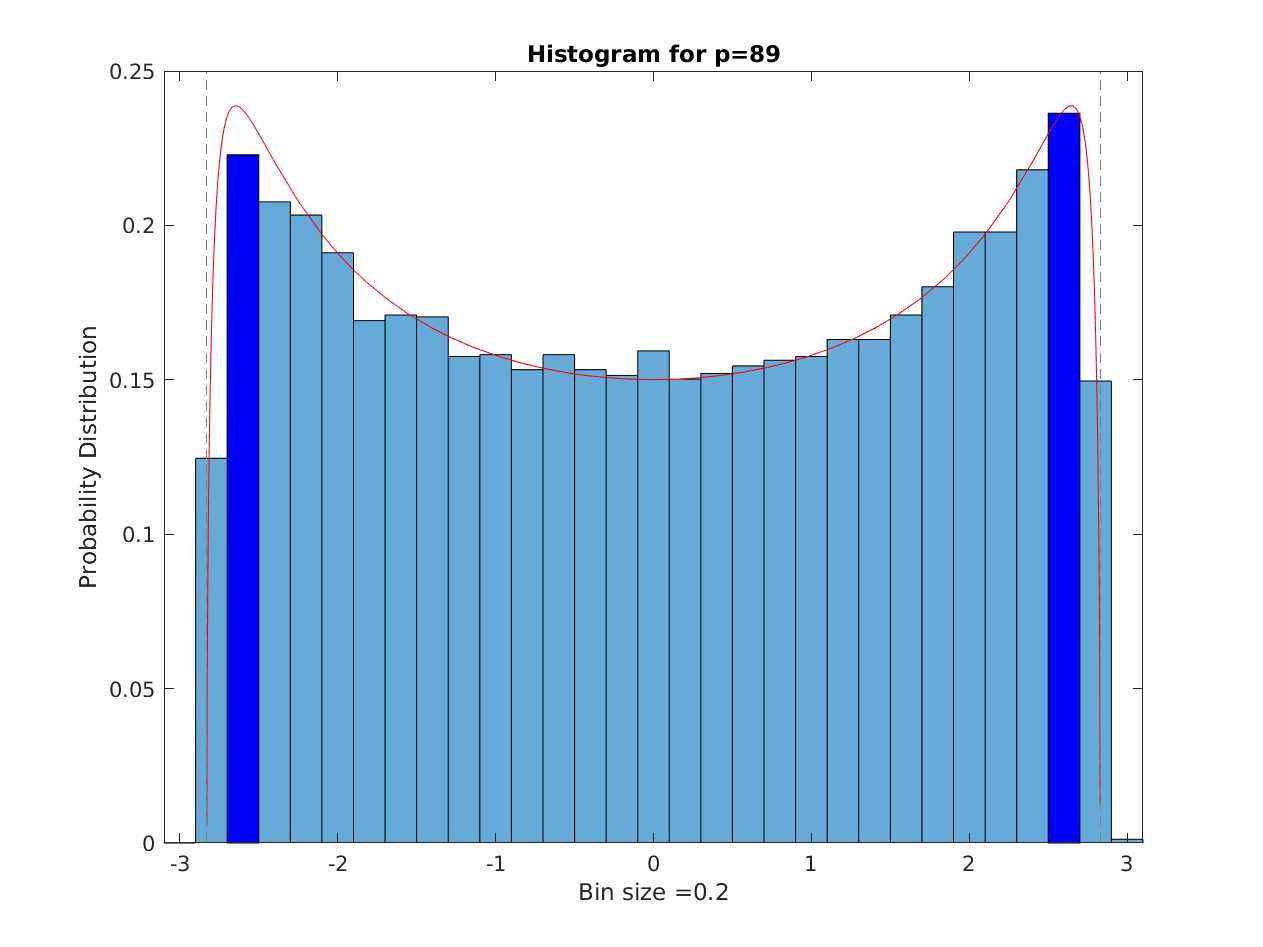}}
\hfill
\caption{Histogram of eigenvalues for $p=83$ and $89$ with the density $\rho_3(x)$ shown in red.}
\label{fig:histogram}
\end{figure}

Taking $L$ growing simultaneously with $p$ gives much more information than one could achieve from any fixed $L$. In particular, we deduce the following bound for the discrepancy between $\mu_p$ and $\rho_3$.
\begin{corollary}
For any interval $I \subseteq [-3,3]$,
\[
| \mu_p(I) - \rho_3(I) | \lesssim \frac{1}{\log{p}}.
\]
\label{cor:discrepancy}
\end{corollary}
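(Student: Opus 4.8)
The plan is to pass from the moment comparison of Theorem~\ref{thm:logp} to a bound on $\mu_p(I)$ by sandwiching $\mathbbm{1}_I$ between two polynomials of controlled degree, and then to balance that degree against $\log p$.

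\emph{Step 1 (moments of a polynomial).} By linearity, Theorem~\ref{thm:logp} gives, for any polynomial $Q(x)=\sum_{\ell=0}^{L}c_\ell x^\ell$,
\[
  \Bigl| \int Q\,d\mu_p - \int Q(x)\,\rho_3(x)\,dx \Bigr| \;\le\; \frac{K}{p}\sum_{\ell=0}^{L}|c_\ell|\,C^\ell ,
\]
with $K$ absolute. I then need to control $\sum_\ell|c_\ell|C^\ell$ in terms of $B:=\sup_{[-3,3]}|Q|$ alone. Writing $Q$ in Chebyshev polynomials adapted to $[-3,3]$, $Q(x)=\sum_{k\le L}a_k T_k(x/3)$, orthogonality against the arcsine weight gives $|a_k|\le 2B$; and since the coefficients $t_{k,j}$ of $T_k$ satisfy $\sum_j|t_{k,j}|\,r^j\le (r(1+\sqrt2))^k$ for $r\ge 1$, one gets $\sum_\ell|c_\ell|C^\ell \lesssim B\,D^{L}$ for an absolute constant $D>1$ depending only on $C$. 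Hence the moment error of a degree-$L$ polynomial bounded by $B$ on $[-3,3]$ is $\lesssim B\,D^{L}/p$.

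\emph{Step 2 (extremal polynomials).} For a fixed $I\subseteq[-3,3]$ and a parameter $L$, I want polynomials $Q^{\pm}$ of degree $\le L$ with $Q^{-}\le\mathbbm{1}_I\le Q^{+}$ on all of $[-3,3]$, with $\|Q^{\pm}\|_{\infty,[-3,3]}$ bounded by an absolute constant, and with $\int_{-3}^{3}(Q^{+}-Q^{-})\rho_3\,dx\lesssim 1/L$. This is the technical heart and \textbf{the step I expect to be the main obstacle}, because the sup-norm bound and the $O(1/L)$ integral error are needed \emph{simultaneously}. I would get $Q^{\pm}$ from the classical Beurling--Selberg construction: the substitution $x=3\cos\theta$ turns $\mathbbm{1}_I$ into the indicator of a symmetric pair of arcs on $\R/2\pi\Z$, whose Selberg majorant and minorant, symmetrised to be even, are trigonometric polynomials of degree $L$ that are bounded by an absolute constant and have $L^1$-error $O(1/L)$; pulling these back produces $Q^{\pm}$ as polynomials in $x/3$. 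The change of variables multiplies Lebesgue measure by $\rho_3(3\cos\theta)\cdot 3\sin\theta$, which is bounded on $[0,\pi]$ since $\rho_3(3\cos\theta)=0$ unless $\cos^2\theta\le 8/9$, whence $\sin\theta\ge 1/3$; so the Lebesgue bound transfers to the required $\rho_3$-bound.

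\emph{Step 3 (optimise $L$).} Since $\mu_p$ is supported in $[-3,3]$ and $Q^{+}\ge\mathbbm{1}_I$ there,
\[
  \mu_p(I)\;\le\;\int Q^{+}\,d\mu_p\;\le\;\int Q^{+}\rho_3\,dx+O\!\Bigl(\tfrac{D^{L}}{p}\Bigr)\;\le\;\rho_3(I)+O\!\Bigl(\tfrac1L\Bigr)+O\!\Bigl(\tfrac{D^{L}}{p}\Bigr),
\]
and $Q^{-}$ gives the matching lower bound, so $|\mu_p(I)-\rho_3(I)|\lesssim \tfrac1L+\tfrac{D^{L}}{p}$. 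Choosing $L=\lfloor(\log p)/(2\log D)\rfloor$ makes $D^{L}\le p^{1/2}$, so the second term is $O(p^{-1/2}\log p)=o(1/\log p)$ while the first is $\asymp 1/\log p$; this is the asserted bound. (For the finitely many small $p$ the estimate is trivial, the left-hand side being at most $1$.)
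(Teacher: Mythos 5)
Your proposal is correct and follows essentially the same route as the paper: sandwich $\mathbbm{1}_I$ between Selberg--Beurling majorant/minorant polynomials after a cosine change of variables, feed their moments into Theorem~\ref{thm:logp}, and take the degree to be a small multiple of $\log p$, exactly as in Section~\ref{sec:discrepancy} (where the needed polynomials are imported via Lemma~\ref{lem:gjs} of Gamburd--Jakobson--Sarnak). The only differences are cosmetic: you control the coefficient sums through the sup norm and Chebyshev expansions, while the paper uses the coefficient bound $B^m$ stated in Lemma~\ref{lem:gjs}, and the paper rescales with $K=3\sqrt{2}$ rather than $3$ to sidestep periodization issues that your symmetric-arc formulation handles directly.
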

Note that the $d$-regular Kesten-McKay measure is supported on the interval $[-2\sqrt{d-1}, 2\sqrt{d-1}]$. A $d$-regular graph could in principle have eigenvalues throughout the interval $[-d,d]$, but when the Kesten-McKay law is valid, it implies that most of the eigenvalues lie in a much smaller interval. As an application of Corollary~\ref{cor:discrepancy}, we have
\begin{corollary}
For the Markoff graph mod $p$, the number of eigenvalues greater than $2\sqrt{2}$ is only $O(p^2/\log{p})$ out of a total of $p^2 \pm 3p$.
\label{cor:exceptional-eigs}
\end{corollary}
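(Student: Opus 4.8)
The plan is to deduce this immediately from Corollary~\ref{cor:discrepancy}. The only extra input needed is that the Kesten--McKay density $\rho_3$ is supported on $[-2\sqrt{2},2\sqrt{2}]$, so any subinterval of $[-3,3]$ lying strictly above $2\sqrt{2}$ is $\rho_3$-null.

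Concretely, I would apply Corollary~\ref{cor:discrepancy} to the interval $I = (2\sqrt{2},3] \subseteq [-3,3]$. Since $\rho_3(I)=0$, this gives $\mu_p(I) = |\mu_p(I)-\rho_3(I)| \lesssim 1/\log p$. On the other hand, by the definition~\eqref{eqn:empirical} of the empirical measure, $\mu_p(I)$ is exactly $|M(\F_p)|^{-1}$ times the number of eigenvalues $\lambda_j$ with $\lambda_j > 2\sqrt{2}$ (taking a half-open interval sidesteps any issue with a possible eigenvalue sitting precisely at $2\sqrt{2}$). Multiplying through by $|M(\F_p)| = p^2 \pm 3p$ then yields
\[
\#\{\, j : \lambda_j > 2\sqrt{2} \,\} \;\lesssim\; \frac{p^2 \pm 3p}{\log p} \;=\; O\!\left( \frac{p^2}{\log p} \right),
\]
which is the assertion. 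The same argument applied to $[-3,-2\sqrt{2})$ shows that at most $O(p^2/\log p)$ eigenvalues lie below $-2\sqrt{2}$ as well, so the two-sided exceptional set has the same size.

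There is essentially no obstacle here: the statement is a formal consequence of the discrepancy bound, and all of the genuine work has gone into Theorem~\ref{thm:logp} and Corollary~\ref{cor:discrepancy}. If desired one could replace $1/\log p$ by whatever quantitative rate the discrepancy estimate supplies; the point worth emphasizing is only that, although a priori a $3$-regular graph may carry eigenvalues anywhere in $[-3,3]$, here all but a vanishing proportion are forced into the Ramanujan interval $[-2\sqrt{2},2\sqrt{2}]$.
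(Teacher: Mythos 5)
Your proof is correct and is exactly the deduction the paper intends: the statement is introduced with the phrase ``As an application of Corollary~\ref{cor:discrepancy},'' and the argument is precisely to apply the discrepancy bound to an interval beyond $2\sqrt{2}$, which is $\rho_3$-null, and rescale by $|M(\F_p)|$.
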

It is plausible that one could replace $1/\log{p}$ by $1/p$ in both of these corollaries.
To use our estimates for moments, we approximate the discontinuous indicator function by polynomials, and this entails some loss.

Figure \ref{fig:histogram} shows the histogram of eigenvalues for the Markoff graphs constructed from $p=83$ and $p=89$, illustrating the fit to the Kesten-McKay law. For $3$-regular graphs, the support is $[-2\sqrt{2}, 2\sqrt{2}]$ and the distribution is bimodal, with maxima at $\pm \sqrt{7}$. 

We begin in Section~\ref{sec:graph} with the overall strategy of comparing the Kesten-McKay moments with those of the graphs we construct. This reduces the problem to counting the fixed points of a natural group action. 
In Section~\ref{sec:examples}, we compute the fixed points in several examples and outline a heuristic that would give a better dependence on $L$ in Theorem~\ref{thm:logp}.
In Section~\ref{sec:fricke}, we review the connection between the Markoff surface and $\GL_2(\Z)$, which is the basis for the actual proof. 
In Section~\ref{sec:torsion}, we review the structure of the group generated by the Markoff moves and in particular identify the torsion elements.
In Theorem~\ref{thm:fixed-points}, we prove that an element has $O(p)$ fixed points with an implicit constant depending on its entries as a matrix in $\GL_2(\Z)$.
In Section~\ref{sec:wrap-up}, we complete the proof of Theorem~\ref{thm:logp} by noting that the matrix entries are exponential in the length $L$ of the word.
In Section~\ref{sec:discrepancy}, we turn to the proof of Corollaries \ref{cor:discrepancy} and \ref{cor:exceptional-eigs}.
In Section~\ref{sec:katz?}, we study a related question about the Markoff group in characteristic $p$, showing that the reduction map from characteristic 0 is injective. We prove
\begin{proposition} \label{prop:katz}
Over any algebraically closed field $F$, let $G$ be the group of polynomial automorphisms $\A^3 \mapsto \A^3$ generated by the Markoff moves. Then $G$ is a free product
\[
G \cong \Z/2 * \Z/2 * \Z/2.
\]
\end{proposition}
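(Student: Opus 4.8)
The plan is to prove this for an arbitrary field $F$ — the hypothesis that $F$ is algebraically closed will not be needed — by a ping-pong (table-tennis) argument. Write $s_1,s_2,s_3$ for the three Markoff moves, so $s_1(x,y,z)=(yz-x,y,z)$, $s_2(x,y,z)=(x,xz-y,z)$ and $s_3(x,y,z)=(x,y,xy-z)$; each is visibly an involution, so $G$ is a quotient of $\Z/2*\Z/2*\Z/2$, and the task is to show that no nonempty reduced word in the $s_i$ acts as the identity on $\A^3$. Since there are three generating subgroups, each of order $2$, the table-tennis lemma for free products is available as soon as one exhibits suitable ping-pong sets.

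The key idea is to play ping-pong not on $\A^3$ itself but on its points over a field $K\supseteq F$ equipped with a non-Archimedean absolute value, on which $G$ acts through the same polynomial formulas. I would take $K=F(t)$ with $|f|=e^{\deg f}$ (degree of the numerator minus that of the denominator), so that $|fg|=|f|\,|g|$ and $|f+g|=\max(|f|,|g|)$ whenever $|f|\ne|g|$, and then set, for $i\in\{1,2,3\}$,
\[
X_i=\{(x_1,x_2,x_3)\in K^3 : |x_k|>1\text{ for all }k,\ \text{and}\ |x_i|>|x_j|\text{ for all }j\ne i\}.
\]
These are nonempty (for instance $(t^2,t,t)\in X_1$) and pairwise disjoint, and the one thing to check is $s_i(X_j)\subseteq X_i$ for $i\ne j$. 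This is a one-line ultrametric computation: if $P=(x,y,z)\in X_2$ then $|y|>|x|$ and $|z|>1$ give $|yz|=|y|\,|z|>|x|$, so the new first coordinate of $s_1(P)=(yz-x,y,z)$ has absolute value exactly $|y|\,|z|$, which is larger than $1$, than $|y|$ and than $|z|$; thus $s_1(P)\in X_1$, and the remaining inclusions follow by the symmetry under permuting the coordinates. The table-tennis lemma then shows that the images of $s_1,s_2,s_3$ in the group of permutations of $K^3$ generate a free product $\Z/2*\Z/2*\Z/2$; since $G$ both surjects onto this image and is a quotient of $\Z/2*\Z/2*\Z/2$ through the $s_i$, all of the maps involved are isomorphisms, which is the claim.

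The only delicate point — and the step I would single out — is the choice of absolute value. Using a non-Archimedean one, rather than the ordinary modulus on $\C$, is precisely what makes the growth of the coordinates under the Markoff moves exactly multiplicative, so that the ping-pong inclusions hold with no error terms and, crucially for the intended application to the reduction map, the argument is completely insensitive to the characteristic of $F$. An Archimedean variant over $\C$ — essentially the classical ``Markoff tree'' reduction on the real surface, where each move away from $(3,3,3)$ enlarges the largest coordinate — would also prove freeness, but only in characteristic zero and at the cost of considerably more bookkeeping.
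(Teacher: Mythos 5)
Your proof is correct, and it takes a genuinely different route from the paper. The paper's argument exploits the Fricke--Klein picture developed in its Sections 4--5: a word in the Markoff moves is represented by a matrix $\begin{bmatrix}a&b\\c&d\end{bmatrix}\in\GL_2(\Z)$, and the moves preserve every level set $x^2+y^2+z^2=xyz+k$. Restricting to the Cayley cubic $k=4$ and parametrizing it by $x=\xi+\xi^{-1}$, $y=\eta+\eta^{-1}$, $z_{\pm}=\xi\eta^{\pm1}+(\xi\eta^{\pm1})^{-1}$ over the algebraic closure, the action linearizes to $(\xi,\eta)\mapsto(\xi^a\eta^c,\xi^b\eta^d)$; if this is the identity, then specializing $\eta=1$ and $\xi=1$ forces $b=c=0$, $a=\pm1$, $d=\pm1$, and a final check rules out $a\ne d$. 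By contrast, your argument never touches $\GL_2(\Z)$ or the level sets: you play ping-pong directly with the polynomial formulas on $K^3$ for $K=F(t)$, using the degree valuation so that the ultrametric inequality makes the key coordinate grow exactly multiplicatively, and the free-product ping-pong lemma (which applies cleanly here since there are three factors, avoiding the $\Z/2*\Z/2$ degeneracy) finishes. What your route buys is generality and economy: it works for an arbitrary base field, not just an algebraically closed one, and it is self-contained, requiring none of the character-variety machinery. What the paper's route buys is coherence with the rest of the argument, since the $\GL_2(\Z)$ encoding and the $\xi+\xi^{-1}$ parametrization are already set up and reused elsewhere (e.g.\ in Lemma~\ref{lem:BGS-lemmas}); the Cayley cubic observation is also of independent interest as the ``linearizable'' degeneration of the Markoff family.

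One small point worth making explicit in your write-up: the ping-pong is carried out on $\mathrm{Perms}(K^3)$, so strictly speaking it shows the composite $\Z/2*\Z/2*\Z/2\to G\to\mathrm{Perms}(K^3)$ is injective. Since $\Z/2*\Z/2*\Z/2\to G$ is surjective by construction and the composite is injective, the first map is an isomorphism onto $G$ — this is exactly the sentence you have, but since $G$ is defined as a group of polynomial automorphisms rather than of permutations, it is worth naming the factorization. The factorization is legitimate because the Markoff moves are given by polynomials with integer coefficients, so any word that equals the identity in $G$ (as a polynomial automorphism over $F$) acts as the identity on $L^3$ for every $F$-algebra $L$, in particular for $L=K$.
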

This is well known when $F = \C$. For $F$ an algebraic closure of $\F_p$, the question is suggested by our estimates for fixed points: A nontrivial relation between the Markoff moves in characteristic $p$ would lead to a word fixing the entire surface. 
This issue does not arise when $L$ is small compared to $p$, and Proposition~\ref{prop:katz} is not strictly necessary for the proof of Theorem~\ref{thm:logp}.
Nevertheless, we give a proof, which makes use of other level sets $x^2+y^2+z^2 = xyz + k$ besides $k=0$.
In Section~\ref{sec:conc}, we conclude by comparing the Kesten-McKay law to other (more difficult) questions about the graphs $\mathfrak{M}_p$, in particular their connectedness and spectral gap.
We use the rest of this Introduction to summarize some of the recent interest in the Markoff equation and its solutions modulo $p$.

The original Markoff surface is defined by the cubic equation
\begin{equation} \label{eqn:markoff}
x^2+y^2+z^2=3xyz
\end{equation}
and its solutions in nonnegative integers $(x,y,z)$ are called Markoff triples.
It differs from our normalization $x^2+y^2+z^2 = xyz$ by a scaling $(x,y,z) \mapsto 3(x,y,z)$, which is invertible over $\F_p$ for $p \geq 5$.
The Markoff equation is a very special case which offers a great simplification compared to other cubic surfaces. The only cubic term in equation (\ref{eqn:markoff}) is $3xyz$, so upon fixing two variables, it is only a quadratic equation for the third. Exchanging the two roots of this quadratic allows us to move from one triple to another. By Vieta's Rule, the two solutions of a quadratic must add up to its middle coefficient, so one such move sends $(x,y,z)$ to another Markoff triple $(x,y,3xy -z)$. There is another move for each of the variables. 
Markoff proved in 1880 \cite{M} that any Markoff triple except $(0,0,0)$ can be reached starting from the solution $(1,1,1)$ by a sequence of Vieta operations and transpositions. 
In contrast, for a general cubic surface, there is no known method for deciding whether there are integer solutions, let alone finding all of them. For instance, it remains out of reach to determine whether a given number is a sum of three (possibly negative) cubes.

The Markoff triples can be displayed as a $3$-regular tree, with $(1,1,1)$ as the root and edges giving the action of the Vieta moves.
Reducing this Markoff tree modulo a prime $p$ yields the finite graph with cycles that we investigate below. In principle, there may be additional solutions over $\F_p$ that do not come from reducing integer solutions mod $p$.  Hence it is no longer guaranteed that all solutions can be found by the Vieta moves, although in practice it seems that they can. If every solution mod $p$ lifts to a solution over the integers, then the same sequence of Vieta moves used to reach the lift will reach its image mod $p$ because the moves are polynomial operations in $(x,y,z)$. Thus the graph of solutions over $\F_p$ will be connected. The connectedness of these graphs for all $p$ is the question of whether \emph{strong approximation} holds for equation~(\ref{eqn:markoff}), that is, whether solutions mod $p$ can always be lifted to integer solutions.
Baragar was the first to conjecture that this connectedness does hold for all $p$ and he verified it for $p \leq 179$ (see p. 124 of \cite{Bar}).

Bourgain-Gamburd-Sarnak \cite{BGS} proved that, for most primes $p$, there is only a single component of nonzero solutions $(x,y,z) \neq (0,0,0)$. Their method fails in case $p^2-1$ has many prime factors, which happens only for rare values of $p$. Even for these exceptional primes, the Bourgain-Gamburd-Sarnak argument shows that there is a giant component containing, for any given $\varepsilon > 0$, all but $p^{\varepsilon}$ of the vertices, while any putative extra components would have size at least a power of $\log(p)$.
On the quantitative level, some improvements have been made by Konyagin-Makarychev-Shparlinski-Vyugin (\cite{KMSV}, Theorems 1.3 and 1.4).
Meiri-Puder \cite{MPC} prove that the Markoff action on the largest component is highly transitive: up to grouping solutions by sign changes as in (\ref{eqn:signs}) below, it is either the full symmetric group or its alternating subgroup. Cerbu-Gunther-Magee-Peilen \cite{CGMP} had proposed earlier that the alternating group arises when $p \equiv 3 \bmod 16$, and the full symmetric group otherwise.

\section{Method of moments} \label{sec:graph}

Let us define the Markoff graph over $\F_p$ more precisely. The vertices are the triples $(x,y,z)$ solving $x^2 + y^2 + z^2 = xyz$, except $(0,0,0)$. The most natural graph for our purposes is defined by taking an edge between $(x,y,z)$ and each of its images $(x,y,xy-z)$, $(x,xz-y,z)$, and $(yz-x,y,z)$. 
We denote the graph by $\mathfrak{M}_p$ and its vertex set by $M(\F_p)$.
It has $p^2 \pm 3p$ vertices depending on whether $p$ is congruent to 1 or to 3 modulo 4. The total number of solutions to $x^2+y^2+z^2=xyz$ mod $p$ is $p^2 + 3\legendre{-1}{p}p + 1$, but we consider $(0,0,0)$ separately from the other solutions because it is in an orbit of its own under the Markoff moves. See equation (2) of Carlitz's note \cite{C} for this count.

At present, we have no guarantee that this graph is connected. Baragar \cite{Bar} conjectured that $\mathfrak{M}_p$ is connected for any prime $p$, and Bourgain-Gamburd-Sarnak proved connectedness unless $p^2-1$ has many small factors in a quantified way \cite{BGS}. They also prove that, even in a possibly disconnected case, there is a giant component containing at least $p^2 \pm 3p - O(p^{\varepsilon})$ vertices for any $\varepsilon > 0$. Our Theorem~\ref{thm:logp} applies both to the whole graph, possibly disconnected, and also to its giant component.

The graphs we study are not simple: Although $M(\F_p)$ does not contain multiple edges, there are loops at a small fraction of the vertices. On the order of $p$ vertices out of $p^2$ have loops. We discuss this further in Proposition~\ref{prop:examples}, and the presence of loops appears again in Lemma~\ref{lem:torsion-fix}. It has some importance for our main proofs.

The graph $M(\F_p)$ is obtained directly from the underlying symmetry of the equation $x^2+y^2 +z^2 =xyz$ under the Markoff moves $m_1$, $m_2$, $m_3$. 
Sometimes it may be preferable to take other edges reflecting further symmetries of the Markoff surface. 
The Markoff equation is preserved by all permutations of $(x,y,z)$ as well as the four \emph{double sign changes} leaving $xyz$ invariant, namely
\begin{equation}
(x,y,z) \mapsto (\sigma_1 x, \sigma_2 y, \sigma_3 z)
\label{eqn:signs}
\end{equation}
where the signs obey $\sigma_1 \sigma_2 \sigma_3 = 1$.
One could add edges corresponding to any of these. Or one could streamline the graph by first taking the quotient by sign changes, or using alternative generators that combine the Markoff moves with permutations. In this way, one could obtain graphs with fewer loops and a closer fit to the Kesten-McKay law. Nevertheless, the Markoff moves themselves seemed the most natural choice to us.

Let $A$ be the adjacency matrix for the Markoff graph mod $p$, that is, the matrix indexed by vertices with $A_{ij} = 1$ when there is an edge between $i$ and $j$ and $A_{ij}=0$ otherwise. 
Note that the diagonal entries $A_{jj}$ are typically 0, but may be 1 when there is a loop connecting $j$ to $j$.
Permuting the vertices changes the adjacency matrix to $\sigma A \sigma^{-1}$, where $\sigma$ is the corresponding permutation matrix. Thus the eigenvalues of $A$ do not depend on any choice of ordering.
The connectedness of a graph is closely related to its eigenvalues. Indeed, for a $d$-regular graph, the number of connected components is the multiplicity of $d$ as an eigenvalue.
The Kesten-McKay law is a general theorem about the distribution of eigenvalues for graphs with few short cycles, either random or deterministic.
We quote the following theorem of McKay (\cite{M}, Theorem 1.1) to emphasize the generality of the Kesten-Mckay law, although we will not be able to use this version to deduce the rate of convergence in Theorem~\ref{thm:logp}.

\begin{theorem} (McKay)
If $G_i$ is a sequence of $d$-regular graphs with $n_i$ vertices such that for each fixed $k$, the number of $k$-cycles in $G_i$ is $o(n_i)$ as $n_i \rightarrow \infty$, then the eigenvalue counting function
\[
\frac{ \# \{ j ; \ \lambda_j(G_i) \leq \lambda \}}{n_i}
\]
converges to
\begin{equation*}
\int_{-\infty}^{\lambda} \frac{d}{2\pi} \frac{ \sqrt{4(d-1) - t^2} }{d^2 - t^2} \mathbbm{1}_{[-2\sqrt{d-1},2\sqrt{d-1}]}(\lambda) dt
\end{equation*}
as $n_i \rightarrow \infty$.
\label{thm:mckay}
\end{theorem}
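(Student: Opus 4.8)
The natural approach is the method of moments, which is also the engine of the present paper. For a $d$-regular graph $G_i$ with adjacency matrix $A_i$ on $n_i$ vertices, the $L$-th moment of its empirical eigenvalue measure is
\[
\frac{1}{n_i}\sum_j \lambda_j(G_i)^L \;=\; \frac{1}{n_i}\tr\!\left(A_i^{\,L}\right) \;=\; \frac{1}{n_i}\sum_{v} w_L(v),
\]
where $w_L(v)$ counts the closed walks of length $L$ in $G_i$ based at $v$. I would show that for each fixed $L$ this average converges to $\int x^L \rho_d(x)\,dx$, and then upgrade convergence of moments to convergence of the counting function.

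The first step is to classify closed walks according to whether they are tree-like. Call a closed walk \emph{reducible} if it can be shortened by repeatedly deleting a step immediately followed by its reversal; the fully reduced walk is either the trivial walk or a nonbacktracking closed walk of positive length, which necessarily traces out a cycle of the graph. A closed walk of length $L$ based at $v$ stays within the ball of radius $\lfloor L/2\rfloor$ about $v$, so if that ball contains no cycle, then every such walk reduces to the trivial walk, and $w_L(v)$ equals the number $c_{L,d}$ of closed walks of length $L$ based at the root of the infinite $d$-regular tree $T_d$. This $c_{L,d}$ vanishes for odd $L$, and for even $L$ it is, by definition, the $L$-th moment of the spectral (Plancherel) measure of $T_d$, which is precisely $\rho_d$; identifying $c_{L,d}$ with $\int x^L \rho_d(x)\,dx$ is the classical computation of Kesten.

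The second step bounds the exceptional vertices. If some closed walk of length $L$ based at $v$ does not reduce to the trivial walk, then $v$ lies within distance $L$ of a cycle of length at most $L$. In a $d$-regular graph the set of vertices within distance $L$ of a fixed cycle has size at most a constant $C_{L,d}$ depending only on $L$ and $d$, so the number of exceptional vertices is at most $C_{L,d}$ times the number of cycles of length $\le L$ in $G_i$, which by hypothesis is $o(n_i)$ for each fixed $L$. Since trivially $0 \le w_L(v) \le d^{L}$ for every $v$, and $c_{L,d}\le d^L$ as well, the non-exceptional vertices contribute nothing to the difference and we obtain
\[
\left| \frac{1}{n_i}\tr\!\left(A_i^{\,L}\right) - c_{L,d} \right| \;\le\; \frac{2 d^{L}}{n_i}\,\#\{\text{exceptional }v\} \;\longrightarrow\; 0 .
\]
Hence every moment of the empirical measure converges to the corresponding moment of $\rho_d$. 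Finally, all the empirical measures and $\rho_d$ are supported in the fixed compact interval $[-d,d]$, so the moment problem is determinate and convergence of all moments implies weak convergence of the measures. Because $\rho_d$ is absolutely continuous with bounded density, its distribution function is continuous, and weak convergence then yields convergence of $\#\{j : \lambda_j(G_i) \le \lambda\}/n_i$ to $\int_{-\infty}^{\lambda}\rho_d(t)\,dt$ at every $\lambda$, as claimed.

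The main obstacle is the walk combinatorics underlying the first two steps: making precise the reduction of closed walks, justifying that a non-tree-like closed walk forces a short cycle near its basepoint, and establishing the bound $C_{L,d}$ for the size of a neighborhood of a short cycle. Once this is in hand, identifying $c_{L,d}$ with the Kesten-McKay moments and passing from moments to the counting function are routine.
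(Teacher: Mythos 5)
The paper does not prove this theorem: it is quoted verbatim from McKay's 1981 paper, with the remark that this qualitative version cannot supply the rate of convergence needed in Theorem~\ref{thm:logp}. There is therefore no in-paper proof to compare against, but your argument is a correct and essentially standard method-of-moments proof, in the same spirit both as McKay's original argument and as the paper's own proof of Theorem~\ref{thm:logp}. The shared skeleton is: express the $L$-th moment as $n_i^{-1}\tr(A_i^L)$, interpret this as a count of closed walks of length $L$, and show that the dominant contribution comes from tree-like walks, whose normalized count is exactly the $L$-th Kesten-McKay moment. Where you diverge from the paper is in bounding the non-tree-like contribution: you localize to vertices near a short cycle and feed in the $o(n_i)$ hypothesis on cycle counts, whereas the paper, working with the specific Markoff graphs, reorganizes the walk count as a sum over nontrivial elements of the free product $\Z/2*\Z/2*\Z/2$ and bounds the number of fixed points of each word via its representing matrix in $\GL_2(\Z)$, which yields the power-saving error $O(C^L/p)$. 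Your final step, passing from convergence of moments to convergence of the distribution functions via moment determinacy on a compact interval and continuity of the limiting CDF, is also sound; the paper carries out the analogous passage quantitatively in the proof of Corollary~\ref{cor:discrepancy}, using Selberg's extremal majorant and minorant polynomials rather than soft weak-convergence arguments.
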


The combinatorial significance of the Kesten-McKay measure is that its moments count walks in a $d$-regular tree
\[
\int_{-2\sqrt{d-1}}^{2\sqrt{d-1}} x^L \rho_d(x) dx = \#\big( \text{closed walks of length} \ L \big)
\]
where the walks must start and return at a designated root of the tree.

For the case of the Markoff graph mod $p$, the number of vertices is $p^2 \pm 3p$. Thus all we have to show is that the number of $k$-cycles is $o(p^2)$ for each fixed $k$. 
For intuition, imagine proving McKay's theorem by the method of moments. 
The moments are given by
\[
\tr(A^L) = \sum \lambda_j^L
\]
up to normalization by $p^2 \pm 3p$.
On the other hand, there is a combinatorial interpretation.
For $L \geq 1$, the trace $\tr(A^L)$ counts paths of length $L$ in the graph:
\begin{align*}
\tr(A^L) &= \sum_{j} \sum_{k_1} \cdots \sum_{k_l} a_{jk_1}a_{k_1k_2} \ldots a_{k_L j} \\ 
&= \sum_{x \in M(\F_p)} \sum_{x \overset{L}{\rightarrow} x } 1
\end{align*}
where the inner sum runs over paths of length $L$ from $x$ to $x$, and the outer sum runs over all vertices $x$. Changing the order of summation, we can rewrite this as
\[
\tr(A^L) = \sum_w \sum_{\gamma: I \rightarrow w} \#\{ {\rm fixed \ points \ of \ } w \}
\]
In the outer sum, $w$ is a reduced word of length $L$ in the free product $\Z/2 * \Z/2 * \Z/2$ with generators $m_1, m_2, m_3$.
In the inner sum, $\gamma$ is a path from the identity to $w$. 
Note that if $w = I$ is the identity, then all of the $p^2 \pm 3p$ vertices are fixed points, making a contribution of 
\[
(\#\text{paths to and from } I )(p^2 + O(p) ).
\]
We divide by $p^2$ for normalization, and the remaining path-count is exactly the corresponding Kesten-McKay moment.

\section{Some examples and heuristics} \label{sec:examples}

To argue that the trivial word contributes the main term, we must study the fixed points of other words in the Markoff moves. Let $w = g_1 \cdots g_L$ be a reduced word of length $L$ where each $g_i$ is one of the Markoff moves $m_1, m_2, m_3$. Write the fixed point equation as
\begin{equation}
\begin{bmatrix}
f(x,y,z) \\
g(x,y,z) \\
h(x,y,z) \\
x^2+y^2+z^2-xyz
\end{bmatrix}
=
\begin{bmatrix}
x \\
y \\
z \\
0
\end{bmatrix}
\label{eqn:fixed}
\end{equation}
where $f$, $g$, and $h$ are polynomials that can be computed by successively applying the moves that make up the word $w$. One might expect this system of four equations in only three unknowns to have no solutions, but there may be redundancy. Indeed, the system always has $(0,0,0)$ as a trivial solution. The extreme case is $w=1$, for which the first three equations amount to $(x,y,z)=(x,y,z)$ and every point on the Markoff surface is fixed. For nontrivial words, we will use the special structure of the Markoff surface to show that there is at least one nontrivial constraint in addition to the equation $x^2+y^2+z^2=xyz$. First, we consider a few examples of short words.

\begin{proposition} \label{prop:examples}
(Fixed points of short words)
\begin{enumerate}
\item
The number of fixed points of a single Markoff move $m_i$ is $p - 4 - \legendre{-1}{p}$, and in particular is at most $p$.
\item
A reduced word of length 2 has no fixed points.
\item
A reduced word of length 3 either has no fixed points or else is conjugate to a single Markoff move.
\end{enumerate}
\end{proposition}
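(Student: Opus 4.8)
The plan is to compute the polynomials $f,g,h$ in the fixed-point system~\eqref{eqn:fixed} directly and explicitly for $L \le 3$, exploiting the fact that each Markoff move changes only one coordinate. I will organize the argument by the length of the word and, crucially, by the pattern of which coordinates are touched.

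For part (1): a single move, say $m_3:(x,y,z)\mapsto(x,y,xy-z)$, fixes $(x,y,z)$ precisely when $xy-z=z$, i.e.\ $xy=2z$, together with the surface equation $x^2+y^2+z^2=xyz$. Substituting $z=xy/2$ (valid since $p\ge 5$) gives $x^2+y^2+x^2y^2/4 = x^2y^2/2$, that is, $x^2+y^2 = x^2y^2/4$, which I would rewrite as $(x^2-4)(y^2-4)=16$. Counting solutions to this affine curve over $\F_p$ is a standard exercise: it is essentially a hyperbola, giving $p-1$ points, minus the points where one factor vanishes (forcing $z=0$ and hence $x=y=0$, already excluded) and with a correction of $\pm 2$ coming from whether $-1$ is a square (the points with $x^2-4 = \pm 4i$, etc.). Careful bookkeeping yields $p - 4 - \legendre{-1}{p}$, and the cases $x=0$ or $y=0$ need to be handled separately to confirm they contribute nothing. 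The analogous count holds for $m_1,m_2$ by the symmetry of the surface under permutations.

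For parts (2) and (3): a reduced word of length $2$ is $m_im_j$ with $i\ne j$, say $m_1m_3$ (so $g_L = m_1$ is applied first? — I will fix the convention that $w = g_1\cdots g_L$ acts with $g_L$ innermost). Since $m_i$ and $m_j$ alter disjoint single coordinates, one can write the composite map explicitly and read off that two of the three scalar fixed-point equations already force, e.g., $xy = 2z$ and then a second incompatible relation; comparing the two modified coordinates against the surface equation leaves no nonzero solution. The length-$3$ case splits by pattern: a word like $m_1m_2m_1$ (palindromic, touching two coordinates) versus $m_1m_2m_3$ (touching all three). In the first case one shows the word is conjugate to a single move — indeed $m_1m_2m_1$ is conjugate in $\Z/2*\Z/2*\Z/2$ to $m_2$ by $m_1$, and one checks the fixed-point counts agree via this conjugation, since conjugate elements of the Markoff group have the same number of fixed points on $M(\F_p)$. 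In the remaining case $m_1m_2m_3$ (and its cyclic variants), an explicit computation of $f,g,h$ should produce a genuinely overdetermined system with only the trivial solution; this is where I would actually grind through the polynomial algebra, tracking that the extra constraint beyond the surface equation is nontrivial.

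The main obstacle is the length-$3$, all-coordinates-distinct case: here none of the shortcuts (disjoint supports, conjugacy to a shorter word) apply, so one is forced into a direct elimination among four polynomial equations. The risk is that the resulting system, while generically overdetermined, might have a positive-dimensional locus of solutions due to some hidden identity peculiar to the Markoff surface; verifying that the leftover constraint is not implied by $x^2+y^2+z^2=xyz$ is the crux. I expect this to come down to a resultant or Gr\"obner-basis computation that can be carried out by hand because the moves are so simple, but it is the one place where the ``redundancy'' alluded to after~\eqref{eqn:fixed} must be ruled out explicitly rather than by a structural argument.
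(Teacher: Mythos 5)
Your plan follows the same skeleton as the paper's proof, so the overall approach is sound; what's missing is execution, and in one place the route you sketch is slightly heavier than what's actually needed. For part (1), your identity $(x^2-4)(y^2-4)=16$ is correct and pretty, but counting points on it is \emph{not} as simple as ``essentially a hyperbola giving $p-1$ points'': the map $(x,y)\mapsto(x^2-4,y^2-4)$ is generically four-to-one, so you end up expanding a product $(1+\legendre{t+4}{p})(1+\legendre{t(t+4)}{p})$ over $t=x^2-4$ and evaluating three Jacobi-type sums. That does close (and lands on $p-4-\legendre{-1}{p}$), but the paper's route is cleaner: fix $y$, observe that the constraint is the single quadratic $(y^2-4)x^2=(2y)^2$ in $x$, so the count per $y$ is $1+\legendre{y^2-4}{p}$, and the total reduces to one character sum $\sum_y\legendre{y^2-4}{p}=-1$.

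For part (2), your reasoning is the same as the paper's; the paper just packages it via the observation that a fixed point of $m_im_j$ is precisely a bigon (the images $m_ix$ and $m_jx$ coincide), which makes the incompatibility of the two scalar equations transparent.

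For part (3), your classification into the palindromic case (conjugate to a generator, hence reduced to part (1)) and the all-distinct case is exactly the paper's dichotomy, and the conjugacy argument is fine since conjugate elements of the Markoff group have equal fixed-point counts on $M(\F_p)$. The gap is that you stop short of the all-distinct computation, flagging it as potentially requiring a resultant or Gr\"obner basis. In fact no such machinery is needed: for $m_2m_3m_1$, the second fixed-point equation immediately gives $y=xz/2$, substituting into the third gives $(4-x^2)z=0$, so either $z=0$ (which cascades to the trivial point) or $x=\pm2$, and the remaining equations then force $(x,y,z)=(\pm2,\pm2,\pm2)$, which fails the Markoff equation mod $p$ for $p\neq2$ since $12\neq\pm8$. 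The ``hidden positive-dimensional locus'' you worry about simply does not arise, and the elimination is a few lines, not a Gr\"obner computation. You should carry it out explicitly to finish the proof.
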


Part (1) shows that the graph $\mathfrak{M}_p$ contains loops, but only at a small fraction of the vertices. This example also shows that it is possible for (\ref{eqn:fixed}) to reduce to just one nontrivial constraint in addition to the Markoff equation. The fact that the words of length 1 together have only on the order of $p$ fixed points has some importance for our main proofs and we will revisit it in Lemma~\ref{lem:torsion-fix}.
Part (2) shows that there are never multiple edges joining the same pair of vertices.
Part (3) shows that the graph contains triangles.

\emph{Proof of (1)}
This count is given in Lemma 2.3 of \cite{CGMP}, noting that $p-4 - \legendre{-1}{p}$ is $p-5$ when $p\equiv1\bmod4$ and $p-3$ when $p\equiv3\bmod4$. For the reader's convenience, we sketch a similar argument here.
If $(x,y,z)  = (x,y,xy-z)$, then the Markoff move $m_3$ connects the vertex $(x,y,z)$ to itself. Substituting $z = xy-z$ into the Markoff equation gives
\[
x^2 + y^2 + \left( \frac{xy}{2} \right)^2 = \frac{x^2y^2}{2}.
\]
For each $y \in \F_p$, this is a quadratic equation for $x$, namely
\[
(y^2 - 4)x^2 = (2y)^2
\]
which has no solutions if $y^2 = 4$, a unique soluton $x=0$ in case $y=0$, and otherwise has $1 + \legendre{y^2-4}{p}$ solutions. 
If $y = 0$, then the fixed point must be $(0,0,0)$, which is not part of our graph.
Thus we remove it from the count and find that the number of solutions is
\[
\sum_{y \neq \pm 2} \left(1 + \legendre{y^2-4}{p} \right) - 1 - \legendre{-1}{p} = p-3 - \legendre{-1}{p} + \sum_{y \neq \pm 2} \legendre{y^2-4}{p}.
\]
The character sum can be evaluated by factoring $y^2-4$ as $(y-2)(y+2)$, changing variables to $u = y-2$, and using $\legendre{u^{-1}}{p} = \legendre{u}{p}$. Note that $v = 1 + 4/u$ assumes all values except $1$ and $0$ when $u$ is restricted to $u \neq -4,0$, so that
\[
\sum_{y \neq \pm 2 } \legendre{y^2-4}{p} = \sum_{u \neq -4,0} \legendre{u}{p} \legendre{u+4}{p} = \sum_{v \neq 1,0} \legendre{v}{p} = -1.
\]
Our count becomes $p-4 - \legendre{-1}{p}$ and the result follows. \qed

For any $(x,y)$ solving $y^2 = x^2(y^2/4-1)$, taking $z = xy/2$ gives a point $(x,y,z)$ connected to itself by $m_3$. In the same way, taking $x = yz/2$ or $y=xz/2$ gives points fixed by $m_1$ or $m_2$. All told, there are $3(p-4-\legendre{-1}{p})$ vertices fixed by one of the generators. At each such vertex, there is a single loop. 
Note, as a special case of part (2), that only $(0,0,0)$ is fixed by multiple generators at once.

\emph{Proof of (2)}
A word of length 2 has no fixed points.
We stated before that the Markoff graph does not contain bigons -- that is, multiple edges between the same pair of vertices -- and a fixed point $x$ of $m_i m_j$ is equivalent to a bigon between $x$ and $m_j x$. It is easy to see why this does not occur. For example, if the moves $m_3$ and $m_2$ define the same edge starting from $(x,y,z)$, then
\[
(x,y,xy-z) = (x, xz-y, z).
\]
Equivalently, $2y = xz$ and $2z = xy$. Thus $2y = x^2y/2$, which implies that either $y=0$ or $x = \pm 2$.
If $y=0$, then $2z = xy = 0$ forces $z=0$, and then the Markoff equation implies that $x$ is also 0. Thus this case arises only for $(x,y,z) = (0,0,0)$, which is not part of our graph.
On the other hand, the cases $x = \pm 2$ do not arise at all. Indeed, $2z = xy = \pm 2y$ implies $z = \pm y$.
Substituting this into the Markoff equation gives
\[
4 + 2y^2 = 2y^2
\]
which cannot be. \qed

\emph{Proof of (3)}
The words of length 3 are either $m_2m_3m_2$ or $m_2m_3m_1$, up to permuting the variables $x,y,z$.
Note that $m_2m_3m_2$ is conjugate to $m_3$ since $m_2^{-1}=m_2$, and so it has the same number of fixed points as $m_3$.
For the word $m_2m_3m_1$, all four equations impose nontrivial constraints and we will see that there are no solutions. Composing from left to right, we arrive at
\[
\begin{bmatrix}
(xz-y)(x(xz-y)-z)-x \\
xz- y \\
x(xz-y) - z \\
x^2+y^2+z^2
\end{bmatrix}
=
\begin{bmatrix}
x \\
y \\
z \\
xyz
\end{bmatrix}
\]
The second equation implies $y = xz/2$, and substituting this in the third gives
\[
2z = \frac{x^2z}{2}.
\]
Hence either $z=0$ or $x^2=4$. If $z=0$, then the other equations quickly lead us to the trivial solution $(x,y,z)=(0,0,0)$. Otherwise, we have $x = \pm 2$, and substituting this in the first equation shows that $z^2=4$. Hence any nonzero solutions must be of the form $(x,y,z) = (\pm 2, \pm 2, \pm 2)$. But no such triples solve the Markoff equation $x^2 + y^2 + z^2 = xyz$ since $12 \neq \pm 8$ mod $p$ for any $p \neq 2$. \qed

As an example involving a word of length 4, consider $m_2m_3m_2m_3$. The equation $f=x$ becomes vacuous because the word does not involve $m_1$. The remaining equations $g=y$ and $h=z$ can both be solved by taking $x=0$. Taking $x=0$ in the Markoff equation, we see that every solution of $y^2+z^2=0$ leads to a fixed point. If $p \equiv 1$ mod 4, then $-1$ is a square mod $p$ and any point $(0,y, \sqrt{-1}y)$ is fixed on the Markoff surface. Thus the system (\ref{eqn:fixed}) can have on the order of $p$ solutions even for a word that is not conjugate to any of the Markoff moves.

In all of these examples, there are on the order of $p$ fixed points at the most. Now we present a heuristic suggesting why this trend should continue for longer words, so that the system (\ref{eqn:fixed}) has only $O(p)$ solutions. 
Note first that appying a move such as $h \mapsto fg - h$ at most doubles the overall degree of the polynomials in the sense that, with respect to any of the variables $x$, $y$, or $z$,
\[
\max(\deg(f),\deg(g),\deg(fg-h)) \leq 2\max(\deg(f),\deg(g),\deg(h)).
\]
It could conceivably leave the degree the same if $\deg(h) \geq \deg(f)+\deg(g)$.
In any case, for a word of length $L$, the final $f$, $g$, and $h$ have degree at most $2^L$ in any of the variables $x,y,z$.

Fix $z \in \F_p$. We expect that (\ref{eqn:fixed}) has only $O(1)$ solutions for $x,y$. The Markoff equation amounts to a quadratic in $y$ with at most two solutions per value of $x$:
\[
y = \frac{xz \pm \sqrt{x^2z^2-4(x^2+z^2)}}{2}
\]
We substitute this into any of the other equations in the system (\ref{eqn:fixed}), say $f=x$, to obtain
\begin{equation} \label{eqn:suby}
f\left(x, \frac{xz \pm \sqrt{x^2z^2-4(x^2+z^2)}}{2} ,z\right) = 0
\end{equation}

For a word of length $L$, the polynomial $f(x,y,z)$ has degree at most $2^L$ in any of its variables. Thus, for any fixed $z$, the one-variable polynomial
\[
F_z(x) = (f_{0z}(x) - x)^2 - \big(x^2z^2 - 4(x^2+z^2)\big) f_{1z}(x)
\]
has degree at most $2^{L+1}$. 
Hence (\ref{eqn:suby}) has at most $2^{L+1}$ solutions for $x$, unless $z$ is such that $F_z$ vanishes identically. It is not clear how to rule out this vanishing, and we will take advantage of very special properties of the Markoff surface to do so partially. If there were no such $z$, we could conclude that the number of fixed points is at most $2^{L+1}p$. Instead, our bound will lead to $C^L p$ for some constant $C > 2$, and in particular to $2^{17L+10}p$.
Note also that although the total number of solutions to (\ref{eqn:suby}) might be as high as $2^{L+1}$, perhaps very few of these lie in the ground field $\F_p$.

\section{The Fricke-Klein trace identity and its consequences} \label{sec:fricke}

A helpful interpretation of the Markoff equation, as well as much of the interest in studying it, is provided by Fricke's trace identity:
\[
\tr(A)^2 + \tr(B)^2 + \tr(AB)^2 = \tr(A)\tr(B)\tr(AB) + \tr(ABA^{-1}B^{-1}) + 2
\]
valid for $A,B \in \SL_2(\C)$. As an algebraic identity, this remains valid for $A, B \in \SL_2$ over other rings as well as $\C$. It can be proved using the Cayley-Hamilton theorem and other properties of the trace. See, for instance, Proposition 4.3 in Aigner's book (\cite{A}, p. 65).

If we fix $\tr([A,B]) = -2$ in Fricke's identity, then $(\tr(A),\tr(B),\tr(AB))$ lies on the Markoff surface $M(\C)$.
This can be used to understand the Markoff moves as automorphisms of the free group on two generators.
Write $F_2$ for the free group on two generators $X,Y$ and $\Hom_{\kappa}(F_2, \SL_2(\C))$ for the set of homomorphisms $\theta : F_2 \rightarrow \SL_2(\C)$ such that $\tr(\theta([X,Y])) = \kappa$. 
Since conjugation preserves traces, we have a well-defined map on the quotient by $\SL_2(\C)$-conjugacy:
\[
\Phi: \Hom_{-2}(F_{2},\SL_{2}(\C))/\SL_{2}(\C)\to M(\C)
\]
induced by
\[
\theta \mapsto (\theta(X),\theta(Y),\theta(XY)).
\]
This is a biholomorphism with respect to the natural complex structures on $M(\C)$ and $\Hom_{-2}(F_2, \SL_2(\C))/\SL_2(\C)$. 
The group of outer automorphisms $\Out(F_2)$ acts on $\Hom(F_2,\SL_{2}(\C))$ by composing $\theta: F_2 \rightarrow \SL_2(\C)$ with $\sigma \in \Out(F_2)$. 

If $\tr(\theta[X,Y]) = \kappa$, then we also have
\[
\tr (\theta \circ \sigma [X,Y]) = \tr(\theta([\sigma(X),\sigma(Y)])) = \kappa.
\]
This can be shown using cyclicity of trace together with the fact that $\tr(A) = \tr(A^{-1})$ for $A \in \SL_2(\C)$.
It follows that $\Out(F_2)$ preserves the character variety $\Hom_{-2}(F_2, \SL_2(\C))/\SL_2(\C)$. The map $\Phi$ then induces an action of $\Out(F_2)$ on the Markoff surface $M(\C)$.

Any automorphism preserves the commutator subgroup, and in particular $\Out(F_2)$ acts on the abelianization
\[
F_2^{\text{ab}} = F_2/[F_2,F_2] \cong \Z^2
\]
which is a free abelian group of rank 2. This action induces a map
\[
\Out(F_2) \rightarrow \Aut(\Z^2) = \GL_2(\Z)
\]
and it is a theorem of Nielsen that this map is an isomorphism (see, for instance, Theorem 6.24 in \cite{A} or \cite{N} for the original article).
The kernel is $\{ 1 \}$ because if an automorphism $\sigma$ of $F_2$ acted trivially modulo commutators, it would have to be an inner automorphism (conjugation by a fixed element). If $\overline{\sigma}$ is an automorphism of $F_2/[F_2,F_2]$, then taking $\sigma(w) = \sigma(w[F_2,F_2])$ is well-defined up to inner automorphisms, and so the map $\Out(F_2) \rightarrow \Aut(F_2/[F_2,F_2])$ is onto. Thus we have an isomorphism $\Out(F_2) \cong \GL_2(\Z)$.

The induced action of $\GL_{2}(\Z)$ on $M(\C)$ factors through an action of $\PGL_{2}(\Z)$. This action is by polynomial automorphisms, and these polynomial automorphisms are defined over $\Z$. Therefore they descend to permutations of $M(\F_{p})$.

To determine explicit matrices for the Markoff generators, we argue as follows.
Suppose $X$ is a $2 \times 2$ matrix of determinant 1 (over any ring). By the Cayley-Hamilton theorem, $X$ solves its own characteristic polynomial, so
\[
X^2 - \tr(X)X + 1 = 0.
\]
Multiplying by $YX^{-1}$, we obtain
\[
YX - \tr(X)Y+ YX^{-1} = 0.
\]
Taking the trace of both sides gives
\[
\tr(YX) = \tr(X)\tr(Y) - \tr(YX^{-1}).
\]
This has the same form as a Markoff move on the vector of traces 
\[
(\tr(X),\tr(Y),\tr(YX)),
\]
with the other solution for the third coordinate being $\tr(YX^{-1})$.
To keep the third matrix equal to the product of the first two, we use $\tr(A^{-1}) = \tr(A)$ for $A \in \SL_2$ to rewrite the vector of traces as
\[
(\tr(X),\tr(Y),\tr(YX^{-1}) ) = (\tr(X), \tr(Y^{-1}), \tr(XY^{-1}))
\]
Thus the third move $m_3$ has sent $X$ to $X$ and $Y$ to $Y^{-1}$, which corresponds to the matrix
\[
[m_3] = \begin{bmatrix} 1 & 0 \\ 0 & -1 \end{bmatrix}.
\]
Equally well, since we work in $\PGL_2$, $m_3$ could be represented by $\begin{bmatrix} -1 & 0 \\ 0 & 1 \end{bmatrix}$, which would correspond to writing the trace vector as
\[
(\tr(X),\tr(Y),\tr(YX^{-1}) ) = (\tr(X^{-1}), \tr(Y), \tr(X^{-1} Y) )
\]
by cyclicity of trace.
In the same way, we find that the first move $m_1$ sends $(A,B)$ to $(AB^2,B^{-1})$. The second move sends $A$ to $A^{-1}$ and $B$ to $A^2B$. The third move sends $A$ to $A^{-1}$ and $B$ to $B$. In the abelianization of the free group $\langle A, B \rangle$, these correspond to the matrices
\begin{equation}
[m_1] = \begin{bmatrix}
1 & 0 \\
2 & -1
\end{bmatrix}
[m_2] =
\begin{bmatrix}
-1 & 2 \\
0 & 1
\end{bmatrix},
[m_3] = \begin{bmatrix}
-1 & 0 \\
0 & 1
\end{bmatrix}.
\label{eqn:markoff-matrices}
\end{equation}
In particular, the group generated by these matrices acts on the Markoff surface.
One also has permutations of the three coordinates. For instance, the transposition $\tau_{23}$ acts by
\[
(\tr(A),\tr(B),\tr(AB) ) \mapsto (\tr(A), \tr(AB), \tr(B) ) = ( \tr(A), \tr(A^{-1}B^{-1}), \tr(B^{-1}) )
\]
so that, in matrix form,
\[
[\tau_{23}] = \begin{bmatrix} 1 & -1 \\ 0 & -1 \end{bmatrix}.
\]
Likewise, $[\tau_{13}] = \begin{bmatrix} 1 & 0 \\ 1 & -1 \end{bmatrix}$. As a consistency check, one must have $[m_2] = [\tau_{23} ][m_3][\tau_{23}]$ and $[m_1] = [\tau_{13}][m_3][\tau_{13}]$ up to sign.

\section{The structure of the Markoff group} \label{sec:torsion}

Let $G \leq \PGL_{2}(\Z)$ be the group generated by the involutions
\begin{equation}
m_{1}=\begin{bmatrix}1 & 0\\
2 & -1
\end{bmatrix},\: m_{2}=\begin{bmatrix}-1 & 2\\
0 & 1
\end{bmatrix},\: m_{3}=\begin{bmatrix}-1 & 0\\
0 & 1
\end{bmatrix}.
\end{equation}
These act on $M(\F_{p})$ by polynomial automorphisms known as Markoff moves or Vieta involutions: 
\begin{equation}
m_{1}(x,y,z)=(yz-x,y,z),\:m_{2}(x,y,z)=(x,xz-y,z),\:m_{3}(x,y,z)=(x,y,xy-z).
\end{equation}
As an abstract group, $G\cong\Z/2\Z*\Z/2\Z*\Z/2\Z$ with the $m_{i}$ the generators of the factors in the free product. 
It will follow from this that 
\begin{lemma} \label{lem:The-only-torsion}
The only torsion elements in the Markoff group are the Markoff moves themselves and their conjugates.
\end{lemma}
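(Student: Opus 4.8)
The plan is to deduce the lemma purely from the abstract structure $G \cong \Z/2\Z * \Z/2\Z * \Z/2\Z$, via the normal form theorem for free products. Every nontrivial $g \in G$ has a unique reduced expression $g = m_{i_1} m_{i_2} \cdots m_{i_k}$ with $i_j \in \{1,2,3\}$ and $i_j \neq i_{j+1}$ for every $j$; since each factor is $\Z/2\Z$, its only nontrivial element is the corresponding $m_i$, so ``reduced'' simply means that no two consecutive indices coincide. Call such a word \emph{cyclically reduced} if in addition $k \leq 1$, or $i_1 \neq i_k$.

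First I would check that every element of $G$ is conjugate to a cyclically reduced one. If $g = m_{i_1} \cdots m_{i_k}$ is reduced with $k \geq 2$ and $i_1 = i_k$, then conjugating by $m_{i_1}$ and using $m_{i_1}^2 = 1$ gives $m_{i_1} g m_{i_1} = m_{i_2} \cdots m_{i_{k-1}}$, which is again reduced and has length $k-2$; iterating, we reach a cyclically reduced word (possibly the empty word). Next I would show that a cyclically reduced $g = m_{i_1}\cdots m_{i_k}$ with $k \geq 2$ has infinite order: for any $m \geq 1$ the concatenation
\[
g^m = \underbrace{m_{i_1} \cdots m_{i_k}}_{1}\,\underbrace{m_{i_1} \cdots m_{i_k}}_{2}\, \cdots\, \underbrace{m_{i_1} \cdots m_{i_k}}_{m}
\]
is already reduced: there is no cancellation inside a block because $g$ is reduced, and none at a block junction because the adjacent letters there are $m_{i_k}$ and $m_{i_1}$ with $i_k \neq i_1$. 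Hence $g^m$ is a reduced word of length $mk \geq 2$, so $g^m \neq 1$, and $g$ has infinite order.

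Combining these two observations, any torsion element of $G$ must be conjugate to a cyclically reduced word of length $\leq 1$, i.e. to the identity or to one of $m_1, m_2, m_3$. Conversely $m_i^2 = 1$, so each $m_i$ and all of its conjugates are nontrivial torsion elements. Therefore the torsion elements of $G$ are exactly the conjugates of the Markoff moves.

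I do not expect a serious obstacle here; the argument is the standard one showing that torsion in a free product $A_1 * \cdots * A_n$ is always conjugate into a factor $A_i$ (a consequence of the normal form theorem). The only points needing a word of care are the reduction to a cyclically reduced representative and the claim that no cancellation occurs across block boundaries of $g^m$, both of which are immediate once the normal form is available. One could alternatively invoke the general free-product fact directly and skip the explicit computation, but I prefer to include the short self-contained version.
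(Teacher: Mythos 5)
Your argument is correct, and it takes a genuinely different route from the paper's. The paper cites Kurosh's subgroup theorem, applied to the finite subgroup $\langle w\rangle$ generated by a torsion element: Kurosh says any subgroup of a free product $A*B*C*\cdots$ is itself a free product of a free group and conjugates of subgroups of the factors, and for a finite subgroup the free part is trivial and only one conjugate factor can appear, which forces $\langle w\rangle$ to be conjugate into some $\langle m_i\rangle\cong\Z/2$. You instead work directly with the normal form theorem: you cyclically reduce a word by repeated conjugation by its first letter, and then observe that the powers of a cyclically reduced word of length $\geq 2$ remain reduced (no cancellation within or across blocks, since $i_k\neq i_1$), hence nonidentity. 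What the paper's approach buys is brevity given the black box: one invocation of Kurosh and the structure of $\Z/2*\Z/2*\Z/2$ does the rest. What your approach buys is self-containment and transparency; it uses only the normal form (the defining property of a free product) and a short combinatorial argument, rather than a comparatively heavy structure theorem. Both are standard proofs of the general fact that torsion in a free product is conjugate into a factor, and either is a valid substitute for the other here. One small presentational point: when you say the iteration reaches a cyclically reduced word "possibly the empty word," you should note that this case corresponds to $g$ being conjugate to the identity, i.e.\ $g=1$, which is excluded when discussing nontrivial torsion; as written this is implicit but worth a clause.
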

To show this, we appeal to Kurosh's theorem on the subgroups of free products (see Corollary 4.9.1 in \cite{MKS}). 
\begin{theorem} (Kurosh)
A subgroup $H$ of a free product $A * B * C * \ldots$ must itself be a free product of the form
\[
H = F * \left( *_j g_j H_j g_j^{-1} \right)
\]
where $F$ is free on some subset of $G$ and each $g_j H_j g_j^{-1}$ is conjugate to a subgroup $H_j$ of one of the factors $A, B, C, \ldots$. 
\label{thm:kurosh}
\end{theorem}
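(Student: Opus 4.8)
The plan is to deduce Kurosh's theorem from Bass--Serre theory, viewing the free product $A*B*C*\cdots$ as the fundamental group of a graph of groups and exploiting the fact that any subgroup acts on the same tree. First I would build the Bass--Serre tree $T$ of $G = A*B*C*\cdots$ from the ``star'' graph of groups: one central vertex carrying the trivial group, one vertex $v_i$ for each factor carrying that factor, and one edge from the centre to each $v_i$ carrying the trivial group; its fundamental group is the free product. Concretely $T$ has vertex set $G \sqcup \bigl(\bigsqcup_i G/A_i\bigr)$ and edge set $\bigsqcup_i G$, where $g$ in the $i$-th batch of edges joins the vertex $g$ to the vertex $gA_i$, and $G$ acts by left translation. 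That $T$ is connected and acyclic is exactly the normal form theorem for free products (every nontrivial element is uniquely a reduced alternating word in the factors), which I would quote rather than reprove. The structural features that matter are: every edge stabiliser is trivial, every central-type vertex has trivial stabiliser, and $\Stab(gA_i) = gA_i g^{-1}$.

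Next I restrict the $G$-action on $T$ to the subgroup $H$. Since $G$ acts without inversions, so does $H$. Forming the quotient graph $Y = H\backslash T$, choosing a maximal subtree $Y_0 \subseteq Y$, and lifting $Y_0$ together with one representative for each edge of $Y$ outside $Y_0$ back to $T$, I read off the quotient graph of groups: its edge groups are the $H$-stabilisers of edges of $T$, hence trivial; its vertex groups are the $H$-stabilisers of vertices of $T$, hence trivial at central-type vertices and of the form $H \cap g A_i g^{-1}$ at the others. By the reconstruction part of Bass--Serre theory, $H$ is isomorphic to the fundamental group of this quotient graph of groups.

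Finally I simplify. A graph of groups all of whose edge groups are trivial has fundamental group equal to the free product of its vertex groups along the chosen spanning tree, free-producted with a free group $F$ of rank equal to the first Betti number of $Y$, one free generator per edge of $Y$ outside $Y_0$; these generators are represented by explicit elements of $G$ coming from the chosen lifts, so $F$ is free on a subset of $G$ as asserted. The nontrivial vertex groups are the subgroups $H \cap g_j A_{i(j)} g_j^{-1}$, and each equals $g_j H_j g_j^{-1}$ with $H_j := g_j^{-1} H g_j \cap A_{i(j)}$ a subgroup of the factor $A_{i(j)}$. This is precisely the claimed decomposition $H = F * \bigl(*_j\, g_j H_j g_j^{-1}\bigr)$, and nothing in the argument cares whether there are finitely or infinitely many factors, since the star graph simply acquires the corresponding number of edges.

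The hard part is the middle step: honestly invoking (or, in a self-contained account, developing) the reconstruction theorem identifying $H$ with $\pi_1$ of the quotient graph of groups, which is where the genuine content of Bass--Serre theory lies and which requires the normal form for elements of a graph of groups. If one wishes to bypass that machinery, the classical alternative used in \cite{MKS} is a direct Reidemeister--Schreier computation: choose double coset representatives for $H\backslash G / A_i$ for each $i$ together with a Schreier transversal for $H$ in $G$, rewrite a generating set of $H$, and verify via the normal form theorem for free products that the rewritten generators split cleanly into the intersections $H \cap gA_i g^{-1}$ and a free part subject to no further relations. That route trades the topological overhead for an intricate bookkeeping of coset representatives, which is its own main obstacle.
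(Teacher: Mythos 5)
Your argument is correct, but it is worth pointing out that the paper does not prove this statement at all: it quotes Kurosh's subgroup theorem as Corollary 4.9.1 of \cite{MKS}, whose proof is the classical combinatorial one (essentially Kurosh's original argument, in the Reidemeister--Schreier style you mention at the end), and then only uses the easy consequence that a finite subgroup of $\Z/2 * \Z/2 * \Z/2$ is conjugate into a factor. Your route through Bass--Serre theory is the standard modern alternative, and the details check out: the tree you build (vertices $G \sqcup \bigsqcup_i G/A_i$, edges $\bigsqcup_i G$, with $g$ joined to $gA_i$) is indeed the Bass--Serre tree of the star-shaped graph of groups, connectedness and acyclicity are exactly the normal form theorem, edge and central-vertex stabilisers are trivial, $\Stab(gA_i)=gA_ig^{-1}$, the action is without inversions because the two endpoint types of any edge are distinguished, and a graph of groups with trivial edge groups has fundamental group the free product of its vertex groups with a free group of rank the first Betti number of the quotient graph. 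The trade-off is the one you identify yourself: your proof is short and conceptually transparent but leans entirely on the reconstruction theorem of Bass--Serre theory, whereas the \cite{MKS} proof the paper cites is elementary and self-contained at the cost of heavy coset bookkeeping. Two small cosmetic points: the free part $F$ is naturally free on a subset of $H$ (a fortiori of $G$, matching the quoted statement), and for the paper's application one only needs the corollary that torsion elements are conjugate into the factors, which drops out of either proof immediately.
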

In our case, the factors are $A = B = C = \Z/2$ so the only possible subgroups $H_j$ are either trivial or themselves generated by a Markoff move. Suppose $H$ is finite, for instance if $H = \langle w \rangle$ where $w$ is a torsion element. Then the factor $F$ must be trivial, or else $H$ would already be infinite, and likewise there can be only one factor $g_j H_j g_j^{-1}$. Since $H_j$ is either trivial or else equals the whole factor $\langle m \rangle \cong \Z/2$, it follows that $H$ is generated by a single word $gmg^{-1}$ where $m$ is one of the Markoff moves. As claimed, the only torsion elements are conjugates of the Markoff generators.

\section{Bounds for the number of fixed points of words} \label{sec:gl2-fixed}

The goal of this section is to prove the following theorem. 
\begin{theorem} \label{thm:fixed-points}
If $g\in G\leq\PGL_{2}(\Z)$ is not the identity, and is represented by a matrix $\begin{bmatrix}a & b\\
c & d
\end{bmatrix}\in\GL_{2}(\Z)$ 
with $\max(|a|,|b|,|c|,|d|)\leq(p/128)^{1/8}$, then $g$ has at most $1024p\max(|a|,|b|,|c|,|d|)^{8}$ fixed points on $M(\F_{p})$.
\end{theorem}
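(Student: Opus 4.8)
The plan is to exploit the dictionary between $G \leq \PGL_2(\Z)$ and automorphisms of $F_2$, translating the count of fixed points of $g$ on $M(\F_p)$ into a question about representations $\theta: F_2 \to \SL_2(\F_p)$. A point of $M(\F_p)$ fixed by $g$ corresponds, via the map $\Phi$, to a conjugacy class of representations $\theta$ with $\tr(\theta[X,Y]) = -2$ such that $\theta \circ \sigma_g$ is conjugate to $\theta$, where $\sigma_g \in \Out(F_2)$ is the outer automorphism corresponding to the matrix $\begin{bmatrix} a & b \\ c & d \end{bmatrix}$. The subtlety over $\F_p$ (as opposed to $\C$) is that $\Phi$ need not be a bijection and some Markoff points may not lift to genuine representations, but one can still set up an explicit correspondence that is at worst finite-to-one with controlled fibers, so it suffices to bound the number of such $\theta$ up to conjugacy.

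The heart of the matter is then to write down, for the specific word $\sigma_g$, the condition ``$\theta \circ \sigma_g \sim \theta$'' as a polynomial system in the matrix entries of $\theta(X), \theta(Y)$ — equivalently in the trace coordinates — and bound its solutions using B\'ezout. Here is where the matrix entries $(a,b,c,d)$ enter quantitatively: the automorphism $\sigma_g$ sends $X \mapsto$ a word in $X, Y$ whose exponent sum is captured by $(a,c)$ (the first column of the matrix) and $Y \mapsto$ a word captured by $(b,d)$, and the total length of these words — hence the degree of the resulting polynomials in the entries of $\theta(X), \theta(Y)$ — is bounded polynomially (in fact linearly, up to constants) in $\max(|a|,|b|,|c|,|d|)$. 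Combining with the Cayley–Hamilton / Fricke machinery of Section~\ref{sec:fricke} to keep everything in terms of the three trace coordinates $(x,y,z)$, one gets that the fixed-point locus of $g$ inside the surface $M$ is cut out by an additional equation of degree $O(\max(|a|,|b|,|c|,|d|)^{k})$ for some small explicit $k$ (the bound $1024 \cdot (\cdot)^8$ suggests $k=8$ after a couple of squarings to clear square roots, as in equation~(\ref{eqn:suby})). One then argues this extra equation is not identically zero on $M$: this is exactly where Lemma~\ref{lem:The-only-torsion} is used, since if the locus were all of $M(\overline{\F_p})$ then $g$ would act trivially, forcing $g$ to be torsion, i.e. a conjugate of some $m_i$ — and those cases are handled directly by Proposition~\ref{prop:examples}(1), which gives $\leq p$ fixed points. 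For all other $g$, the fixed locus is a curve on the surface $M$ of degree $O(\max(|a|,|b|,|c|,|d|)^8)$, and a plane-curve / B\'ezout-type bound gives $O(p \cdot \max(|a|,|b|,|c|,|d|)^8)$ points over $\F_p$, with the hypothesis $\max(|a|,|b|,|c|,|d|) \leq (p/128)^{1/8}$ ensuring the degree stays below $p$ so that the naive bound (rather than a Weil-type estimate) applies and no component becomes the whole line.

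The main obstacle I anticipate is the step showing the extra constraint does not vanish identically on the surface, and more precisely controlling it when $g$ is \emph{close to} torsion — e.g. $g$ conjugate to $m_i$ times a short word, as in the $m_2m_3m_2m_3$ example in Section~\ref{sec:examples}, where a whole line's worth ($x=0$) of the surface can be fixed. One must show that even in such cases the fixed locus, while possibly containing a line, is not all of $M$, so that B\'ezout still yields an $O(p)$ bound with the right power of the entries; tracking how the degrees of $f,g,h$ in~(\ref{eqn:fixed}) grow (at most doubling per letter, so $\leq 2^L$, and $L = O(\log\max|a|,\ldots|)$... but we want the cleaner polynomial-in-entries bound coming from the $\GL_2$ description rather than the word-length description) is the delicate bookkeeping. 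A secondary obstacle is handling the failure of $\Phi$ to be injective over $\F_p$: one needs to check that the number of Markoff points lying over a single $\SL_2(\F_p)$-conjugacy class of representations, and the number of representations that fail to descend to honest points, are both $O(p)$, so they do not spoil the estimate — this should follow from the fact that the fibers of $\Phi$ are cut out by the vanishing of a fixed polynomial (related to reducibility of $\theta$, i.e. $\theta$ landing in a Borel), independent of $g$.
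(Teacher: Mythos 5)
Your approach is genuinely different from the paper's, and it contains a gap at exactly the point you flag as your ``main obstacle.'' The paper does not go through $\Out(F_2)$, representations $\theta:F_2\to\SL_2(\F_p)$, or a B\'ezout bound at all. Instead, it proves (Lemma~\ref{lem:power}) a trichotomy: after raising $g$ to a power $K\leq 8$, the resulting matrix $h=g^K$ either (i) has all four entries of absolute value $\geq 2$, (ii) is torsion, or (iii) is a standard parabolic shear $\pm\begin{bmatrix}1&n\\0&1\end{bmatrix}$ or its transpose. The exponent $8$ in the theorem comes entirely from this power trick together with the crude bound $\max|h_{ij}|\leq 128\max(|a|,|b|,|c|,|d|)^8$ obtained by repeated squaring, not from ``clearing square roots'' as you guessed. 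Each branch of the trichotomy is then handled by a separate lemma: (i) by quoting CGMP's Lemma~3.9 (which is the rigorous version of the heuristic in Section~\ref{sec:examples} and is closest in spirit to your B\'ezout idea), (ii) by the Kurosh classification of torsion elements together with the count in Proposition~\ref{prop:examples}(1), and (iii) by Proposition~\ref{prop:parabolic}.

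The genuine gap is case (iii), and you correctly sense it but do not close it. When $h$ is a shear, its action on $M(\F_p)$ is $\rot^n$, and this fixes \emph{entire conic sections} $C(x)$ pointwise whenever the parametrizing root $w$ (hyperbolic) or $v$ (elliptic) satisfies $w^n=1$ or $v^n=1$. So the fixed locus is not a single low-degree curve on $M$ that you can feed to B\'ezout in the way you describe; it is a union of up to $n$ full conics, each with $p\pm 1$ points, plus possibly two degenerate lines. A direct B\'ezout argument on the system~(\ref{eqn:fixed}) does not see this structure, because for a shear one of the three fixed-point equations is genuinely vacuous (your own $m_2m_3m_2m_3$ example, where the $f=x$ equation drops out and the locus $x=0$ has $\sim p$ points already). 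To get the needed bound $\lesssim np$ you must count how many conics are fixed, and this is exactly what the Bourgain--Gamburd--Sarnak orbit analysis (Lemma~\ref{lem:BGS-lemmas}, reproduced in the paper) delivers: the number of $w\in\F_p^*$ with $w^n=1$ is at most $n$, and similarly for the elliptic $v$'s. Nothing in your sketch replaces this input, and ``the fixed locus is a curve of degree $O(\max|\cdot|^8)$'' is simply not true as stated for shears. Separately, your proposed non-vanishing step via ``if the locus were all of $M$ then $g$ is torsion'' proves too little: parabolics are not torsion, their fixed locus is proper, and yet they are the hard case precisely because the locus is large. Finally, the paper sidesteps your secondary worry about $\Phi$ being ill-behaved over $\F_p$ entirely by working directly with the polynomial automorphisms on $M(\F_p)$; your route would require you to actually carry out the claimed control on fibers of $\Phi$ and on non-liftable points, which is a nontrivial additional burden that the paper does not pay.
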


The exponent 8 on $\max(|a|,|b|,|c|,|d|)$ can likely be replaced by 1. Similarly, the assumption $\max(|a|,|b|,|c|,|d|)\leq(p/128)^{1/8}$ can most likely be loosened. To keep the arguments to their simplest and most readable, and since the bound above is enough for our qualitative result, we chose not to pursue the optimal constants here.

After raising $g$ to a small power, three natural cases arise, and we will give a different bound in each case.

\begin{lemma} \label{lem:power}
For any element $g\in\GL_{2}(\Z)$, there is a power $1\leq K\leq8$ of $g$ such that one of the following holds.
\begin{enumerate}
\item
All the entries of $g^{K}$ have absolute value at least 2.
\item
$g^{K}$ is a torsion element of $\GL_{2}(\Z)$. In this case, $g$ is already torsion.
\item
$g^{K}$ is one of the following types of matrices
\begin{equation}
\pm \begin{bmatrix}1 & n\\
0 & 1
\end{bmatrix},
\pm \begin{bmatrix}1 & 0\\
n & 1
\end{bmatrix},\quad n\in\Z-\{0\}.
\label{eq:parabolic}
\end{equation}
\end{enumerate}
\end{lemma}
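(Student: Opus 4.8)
The plan is to reduce everything to a trace classification inside $\SL_2(\Z)$, then exhibit a small explicit power in each case. First I would dispose of the determinant. If $\det g=-1$, then $g^2\in\SL_2(\Z)$ with $\tr(g^2)=(\tr g)^2+2$; if moreover $\tr g=0$, Cayley--Hamilton gives $g^2=I$, so $g$ has order at most $2$ and we are in case (2) with $K=1$, while if $\tr g\neq 0$ then $\tr(g^2)\ge 3$ and it suffices to treat $g^2$ as a hyperbolic element of $\SL_2(\Z)$, at the cost of one extra factor of $2$ in $K$. So I may assume $g\in\SL_2(\Z)$ and split on $|\tr g|$ (an integer). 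If $|\tr g|\le 1$ then $g$ is elliptic, hence of finite order ($3$, $4$, or $6$), so $g^1=g$ is torsion: case (2).

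For the parabolic case $|\tr g|=2$, note first that replacing $g$ by $-g$ changes nothing relevant: it preserves being torsion, preserves the set of matrices in \eqref{eq:parabolic}, and preserves having all entries of absolute value $\ge 2$. So assume $\tr g=2$. Then $1$ is a double eigenvalue of $g$, so $N:=g-I$ has trace $0$ and determinant $0$, whence $N^2=0$ by Cayley--Hamilton and $g^n=I+nN$ for every $n$. Write $N=\begin{bmatrix}\alpha&\beta\\\gamma&-\alpha\end{bmatrix}$ with $\alpha^2+\beta\gamma=0$. If $N=0$ then $g=I$ is torsion (case (2)). If $\alpha=0$ then $\beta\gamma=0$, so $\beta=0$ or $\gamma=0$, and $g=I+N$ is \emph{literally} one of the matrices listed in \eqref{eq:parabolic} (case (3)). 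If $\alpha\neq0$ then $\beta\gamma=-\alpha^2\le-1$, so $\beta$ and $\gamma$ are nonzero, and $g^3=I+3N=\begin{bmatrix}1+3\alpha&3\beta\\3\gamma&1-3\alpha\end{bmatrix}$ has every entry of absolute value $\ge2$, since $|3\beta|,|3\gamma|\ge3$ and $|1\pm3\alpha|\ge3|\alpha|-1\ge2$: this is case (1) with $K=3$.

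For the hyperbolic case $|\tr g|\ge3$ I would use the Cayley--Hamilton recursion $g^{n+1}=(\tr g)\,g^n-g^{n-1}$, which gives $g^n=p_ng-p_{n-1}I$ with $p_0=0$, $p_1=1$, $p_2=\tr g$, $p_3=(\tr g)^2-1$, so $|p_2|\ge3$ and $|p_3|\ge8$. Writing $g=\begin{bmatrix}a&b\\c&d\end{bmatrix}$, we get $g^3=\begin{bmatrix}p_3a-p_2&p_3b\\p_3c&p_3d-p_2\end{bmatrix}$. The off-diagonal entries are $p_3b$ and $p_3c$, and $b,c$ are both nonzero, because $b=0$ or $c=0$ would force $ad=\det g=1$, hence $\tr g=\pm2$, contrary to $|\tr g|\ge3$; thus $|p_3b|,|p_3c|\ge8$. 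For the diagonal entries: if $a=0$ then $|p_3a-p_2|=|p_2|\ge3$, and if $|a|\ge1$ then $|p_3a-p_2|\ge|p_3|-|p_2|=(\tr g)^2-|\tr g|-1\ge5$; in either case the absolute value is $\ge2$, and likewise for $d$. So $g^3$ has all entries of absolute value $\ge2$ (case (1), $K=3$), and combined with the determinant reduction this yields $K\le6\le8$ in every case.

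The one step that needs genuine care, as opposed to routine numerics, is the parabolic analysis: one must check that the matrices \eqref{eq:parabolic} are exactly the parabolic elements having no power in case (1) or (2), and in particular that in the subcase $\alpha=0$ the matrix takes that shape \emph{on the nose} rather than merely up to conjugacy, so that the trichotomy is genuinely exhaustive. Everything else is bookkeeping around the Cayley--Hamilton relation, which is the single structural tool: it linearizes powers in both the parabolic ($N^2=0$) and hyperbolic ($g^n=p_ng-p_{n-1}I$) cases and powers the reduction from $\det=-1$ to $\SL_2(\Z)$.
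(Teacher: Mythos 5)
Your proof is correct and rests on the same core idea as the paper's: Cayley--Hamilton, squaring to reduce $\det g=-1$ to $\det g=1$, and a classification by trace, with the observation that $bc=0$ (your $\alpha=0$) together with $ad=1$ is exactly what produces the matrices in \eqref{eq:parabolic}. The one structural difference is that you treat $|\tr g|=2$ and $|\tr g|\ge 3$ separately --- via nilpotency of $g-I$ in the parabolic case and the Chebyshev-type recursion $g^n=p_n g-p_{n-1}I$ in the hyperbolic case --- taking $K=3$ in each and obtaining $K\le 6$ overall, whereas the paper handles all $|\tr g|\ge 2$ in one computation by expanding $g^4$ directly and splitting on $bc=0$ versus $bc\neq 0$, giving $K\le 8$. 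Both are sound; yours is marginally tighter, the paper's marginally shorter.
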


\begin{proof}
We first show that one may take $K \leq 4$ in the case $\det(g) = 1$. To avoid considering the case $\det(g)=-1$ separately, we replace $g$ by $g^2$ and double $K$ if necessary. Assuming $\det(g)=1$, the Cayley-Hamilton theorem implies
\[
g^2 - \tr(g)g + I = 0.
\]
If $\tr(g) = 0$, we then have $g^4 = I$ so that $g$ is torsion. If $\tr(g) = \pm 1$, then multiplying by $g$ gives
\[
g^3 = g( \pm g - I ) = \pm (\pm g - I) - g = \mp I
\]
and hence $g^6 = I$. Therefore $g$ is torsion if $| \tr(g)| < 2$. Otherwise, $|\tr(g)| \geq 2$ and we use $ad-bc = 1$ to write
\begin{align*}
g &= \begin{bmatrix} a & b \\ c & d \end{bmatrix} \\
g^2 &= \begin{bmatrix} a(a+d)-1 & b(a+d) \\ c(a+d) & d(a+d)-1 \end{bmatrix} \\
g^4 &= \begin{bmatrix} (a(a+d)-1)^2 + bc(a+d)^2 & b(a+d)( (a+d)^2 - 2) \\ c(a+d)( (a+d)^2 -2) & (d(a+d)-1)^2 + bc(a+d)^2 \end{bmatrix}
\end{align*}
If $bc=0$, then $ad=1$ and $g$ must be of the form (\ref{eq:parabolic}). Otherwise, we have $|b|\geq 1$, $|c| \geq 1$, and $(a+d)^2 \geq 4$. It follows that all entries of $g^4$ are at least 2 in absolute value (moreover, at least 3). The entries of $g^2$ might not be, for instance if $a=0$.
\end{proof}

\subsection{Fixed points of generic elements of $G$}
The `generic' case is when all the entries of $h$ have absolute value $\geq2$. In this case, we use the following bound of Cerbu-Gunther-Magee-Peilen (\cite{CGMP}, Lemma 3.9).
\begin{lemma} \label{lem:cgmp} (Cerbu-Gunther-Magee-Peilen)
If $g=\begin{bmatrix}a & b\\
c & d
\end{bmatrix}\in\GL_{2}(\Z)$ has 
\[
|a|,|b|,|c|,|d| \geq 2,
\]
then $g$ has fewer than $2p\left(|a|+|b|+\big||d|-|c|\big|\right)$ fixed points on $M(\F_{p})$.
\end{lemma}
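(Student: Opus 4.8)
The plan is to translate $g$ into the language of Section~\ref{sec:fricke}, where it acts on $M$ by an explicit polynomial automorphism of $\A^3$ with components given by Fricke trace polynomials, and then to bound the fixed points by slicing $M$ into conic fibers and applying B\'ezout on each. Let $\sigma\in\Out(F_2)\cong\GL_2(\Z)$ be the outer automorphism attached to $g=\begin{bmatrix}a&b\\c&d\end{bmatrix}$, and pick reduced representatives $U=\sigma(X)$, $V=\sigma(Y)$; their images in $F_2^{\mathrm{ab}}=\Z^2$ are the columns of $g$, so the $(X,Y)$-exponent sums of $U$ are $(a,c)$ and those of $V$ are $(b,d)$. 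Then $g$ acts on $\A^3$, preserving $M$, by
\[
(x,y,z)\ \longmapsto\ \bigl(P_1(x,y,z),\ P_2(x,y,z),\ P_3(x,y,z)\bigr),
\]
where, by the theorem of Fricke--Vogt, $P_1$ is the universal polynomial expressing $\tr\theta(U)$ in terms of $\tr\theta(X),\tr\theta(Y),\tr\theta(XY)$ for $\theta\in\Hom(F_2,\SL_2)$, and $P_2,P_3$ are the analogous polynomials for $V$ and $UV$. The fixed points of $g$ on $M(\F_p)$ are then the $\F_p$-points of $M\cap\{P_1=x,\ P_2=y,\ P_3=z\}$, and in particular they lie on the curve $M\cap\{P_1-x=0\}$.

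The quantitative heart of the argument is a bound on the degrees of $P_1$. By the classical theory of $\SL_2$ trace polynomials (Fricke--Vogt; see \cite{A}), $\tr\theta(w)$ is a polynomial in $\tr\theta(X),\tr\theta(Y),\tr\theta(XY)$ whose partial degrees are controlled by the syllable structure of the cyclically reduced form of $w$, hence by a linear function of the exponent sums. Carrying this out for $U$ and reducing $P_1-x$ modulo the Markoff polynomial $x^2+y^2+z^2-xyz$ produces exactly the quantity $|a|+|b|+\big||d|-|c|\big|$ as the relevant degree bound (after permuting the coordinates of $\A^3$ if convenient); this is essentially Lemma~3.9 of \cite{CGMP}, and obtaining the precise shape of this quantity rather than a cruder linear estimate is the most delicate bookkeeping.

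With this in hand, fix $z_0\in\F_p$ with $z_0\notin\{0,\pm2\}$, so that the fiber $M_{z_0}$ of $M$ over $z=z_0$ is the smooth irreducible affine conic $x^2-z_0xy+y^2+z_0^2=0$, which has at most $p+1$ points over $\F_p$. If $P_1-x$ does not vanish identically on $M_{z_0}$, B\'ezout bounds the number of common points of $M_{z_0}$ with $\{P_1-x=0\}$ by $2\bigl(|a|+|b|+\big||d|-|c|\big|\bigr)$. Summing over the at most $p$ admissible values of $z_0$, and adding the $O(p)$ contribution of the three degenerate fibers $z_0\in\{0,\pm2\}$, yields the asserted bound $2p\bigl(|a|+|b|+\big||d|-|c|\big|\bigr)$ once the numerical constants are reconciled.

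The main obstacle is the vanishing case, in which $P_1-x$ vanishes identically on some conic fiber or, worst of all, on all of $M$. In the latter situation $g$ acts as the identity on $M$; this is excluded by faithfulness of the $\PGL_2(\Z)$-action, which for $p$ large relative to the entries of $g$ follows from the classical faithfulness over $\C$ by a Nullstellensatz-type reduction, and holds for every $p$ by Proposition~\ref{prop:katz}. For an isolated bad fiber, writing $P_1-x$ reduced modulo the Markoff polynomial as an element of $\F_p[z][x]$, a fiber over $z_0$ is bad only when all of its finitely many $x$-coefficients vanish at $z_0$, which happens for at most $\deg$ values of $z_0$; each such fiber contributes at most $p+1$ fixed points, and this too is absorbed into the constant in the statement.
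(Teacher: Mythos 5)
The paper does not actually prove this lemma: it is quoted from \cite{CGMP} (their Lemma 3.9), with the remark that the hypothesis $|a|,|b|,|c|,|d|\geq 2$ is precisely what allows the heuristic of Section~\ref{sec:examples} to be made rigorous. Your proposal is essentially that heuristic, and it has a genuine gap located exactly at the point the paper flags as the difficulty. The quantitative core --- that after expressing the action of $g$ through Fricke trace polynomials as in Section~\ref{sec:fricke} and reducing $P_1-x$ modulo the Markoff polynomial, the relevant degree on each conic fiber is $|a|+|b|+\big||d|-|c|\big|$ --- is never established: you describe it as ``delicate bookkeeping'' and attribute it to ``essentially Lemma~3.9 of \cite{CGMP}'', which is the very statement being proved. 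As written, the step is circular; one needs an actual induction on the word (or the torus parametrization of the conics $C(x)$, as in \cite{CGMP}) to produce that specific degree, and the constant $2$ in the final bound also depends on tracking how eliminating $y$ (which requires squaring to remove the $y$-linear term, as with $F_z$ in Section~\ref{sec:examples}) affects degrees --- none of which is done.

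The second, related gap is the vanishing case, which is where the hypothesis $|a|,|b|,|c|,|d|\ge 2$ --- unused anywhere in your argument --- must enter. If $P_1-x$ vanishes identically on $M$, it does \emph{not} follow that $g$ acts as the identity; it only means $g$ preserves the first coordinate. Any nontrivial word in $m_2,m_3$ alone does exactly this (the paper's example $m_2m_3m_2m_3$ has $P_1\equiv x$ and on the order of $p$ fixed points), so faithfulness of the action (Proposition~\ref{prop:katz}) cannot exclude this case. Such words correspond to matrices with zero entries, which is why the hypothesis on the entries is there; a correct proof must show that $|a|,|b|,|c|,|d|\geq 2$ forces the fixed-point equation restricted to each fiber to be a nonzero polynomial of the stated degree, and nothing in your proposal makes that connection. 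Likewise, your count of bad fibers presumes that the coefficients, as polynomials in $z$, do not all vanish identically --- the same global vanishing issue again --- and allowing each bad fiber to contribute $p+1$ points ``absorbed into the constant'' is not compatible with the exact bound $2p\left(|a|+|b|+\big||d|-|c|\big|\right)$ being asserted.
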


We refer to \cite{CGMP} for the proof. The assumption that all the entries have absolute value at least 2 makes it possible to implement a rigorous version of the heuristic in Section~\ref{sec:examples}.

\subsection{Fixed points of torsion elements of $G$}
The next lemma bounds the number of fixed points of torsion elements of $G$.
\begin{lemma} \label{lem:torsion-fix}
If $g$ is a non-identity torsion element of $G\leq\PGL_{2}(\Z)$ then $g$ has fewer than $p$ fixed points on $M(\F_{p})$.
\end{lemma}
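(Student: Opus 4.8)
The plan is to exploit Lemma~\ref{lem:The-only-torsion}: a non-identity torsion element of $G$ is conjugate (inside $G$) to one of the Markoff moves $m_1, m_2, m_3$. Write $g = hm_ih^{-1}$ with $h \in G$. Since $G$ acts on $M(\F_p)$ by bijections, the fixed-point set of $g$ is the image under $h$ of the fixed-point set of $m_i$; in particular $\#\Fix(g) = \#\Fix(m_i)$. By part (1) of Proposition~\ref{prop:examples}, $m_i$ has exactly $p - 4 - \legendre{-1}{p}$ fixed points on $M(\F_p)$, which is strictly less than $p$. This immediately gives the claimed bound.

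The one point that needs care is that ``conjugate'' in Lemma~\ref{lem:The-only-torsion} must be conjugate \emph{by an element of the Markoff group $G$ itself}, not merely by an element of $\PGL_2(\Z)$ or by an abstract automorphism — otherwise the conjugating map need not be a well-defined permutation of $M(\F_p)$ and the fixed-point counts need not match. I would spell this out: the Kurosh argument used to prove Lemma~\ref{lem:The-only-torsion} produces the conjugator $g_j$ from the free-product decomposition $G = \Z/2 * \Z/2 * \Z/2$, and $g_j$ is a word in $m_1, m_2, m_3$, hence lies in $G$ and acts on $M(\F_p)$. So if $w$ is our torsion element and $w = g_j m g_j^{-1}$, then for any point $P \in M(\F_p)$ we have $wP = P \iff m(g_j^{-1}P) = g_j^{-1}P$, and $P \mapsto g_j^{-1}P$ is a bijection of $M(\F_p)$ onto itself, giving $\#\Fix(w) = \#\Fix(m)$.

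The main (and only real) obstacle is therefore bookkeeping rather than mathematics: making sure the conjugation takes place within $G$ so that it descends to a genuine bijection of the finite set $M(\F_p)$. Once that is granted, the lemma is an immediate consequence of the explicit count in Proposition~\ref{prop:examples}(1): since $\legendre{-1}{p} = \pm 1$, the number of fixed points of each $m_i$, namely $p - 4 - \legendre{-1}{p} \in \{p-5, p-3\}$, is in all cases $< p$, and the same bound holds for every conjugate. I would close by remarking that this is consistent with the heuristic of Section~\ref{sec:examples}: even the torsion elements, which are the ``worst'' case in the sense of having the most fixed points relative to their word length, contribute only $O(p)$ — the same order as the parabolic and generic cases — so no single non-identity word can compete with the $p^2 + O(p)$ fixed points of the identity.
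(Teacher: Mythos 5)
Your proposal is correct and follows essentially the same route as the paper: reduce to a single Markoff move via Lemma~\ref{lem:The-only-torsion} (conjugation within $G$, hence a bijection of $M(\F_p)$ preserving fixed-point counts), then invoke the exact count $p-4-\legendre{-1}{p}$ from Proposition~\ref{prop:examples}(1). The only cosmetic difference is that the paper also records a quick direct bound of $2p$ by solving the quadratic for the fixed points of $m_1$ before citing the sharper count, whereas you go straight to Proposition~\ref{prop:examples}(1); your extra care about the conjugator lying in $G$ is exactly the point the paper uses implicitly.
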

\begin{proof}
 By Lemma \ref{lem:The-only-torsion}, any non-identity torsion $g$ is conjugate in $G$ to a Markoff move $m_{i}$. Therefore it has the same number of fixed points as $m_{i}$ on $M(\F_{p})$. Since the $m_{i}$ have the same number of fixed points by symmetry, we will count the number of fixed points of $m_{1}$. That is, we count how many points $(x,y,z)\in M(\F_{p})$ satisfy $m_{1}(x,y,z)=(yz-x,y,z)=(x,y,z)$. The last two equations are trivial, and substituting $2x = yz$ into the Markoff surface gives
\[
y^2+z^2 + \frac{y^2z^2}{4} = 2y^2z^2.
\]
For each $y \in \F_p$, this is a quadratic equation for $z$, and hence there are at most $2p$ solutions.
This would be enough for our purposes, but the factor of 2 can be removed by Proposition~\ref{prop:examples}, part (1) (Lemma 2.3, \cite{CGMP}). The actual count is either $p-5$ or $p-3$ and the result follows. 
\end{proof}

\subsection{Fixed points of standard parabolic elements in $G$}
Finally, we estimate the number of fixed points of the standard parabolic elements in the list (\ref{eq:parabolic}).
\begin{proposition} \label{prop:parabolic}
If $g\in\GL_{2}(\Z)$ is one of the matrices $\pm \begin{bmatrix}1 & n\\
0 & 1
\end{bmatrix},\pm \begin{bmatrix}1 & 0\\
n & 1
\end{bmatrix}$ where $0 \neq |n| \leq p$, then $g$ has fewer than $2|n|p$ fixed points on $M(\F_{p})$.
\end{proposition}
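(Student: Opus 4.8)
The plan is to realise $g$ as an automorphism preserving the fibration $M\to\A^1$ by the first coordinate and to count fixed points fibre by fibre, where the count collapses to linear algebra over $\F_p$.

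First I would reduce to the single matrix $g=\begin{bmatrix}1 & n\\ 0 & 1\end{bmatrix}$ with $n\geq1$. The sign in $\pm$ is irrelevant because the $\GL_2(\Z)$-action on $M$ factors through $\PGL_2(\Z)$ (Section~\ref{sec:fricke}); the lower-triangular matrix is $\GL_2(\Z)$-conjugate to the upper-triangular one via $\begin{bmatrix}0&1\\1&0\end{bmatrix}$, and $n\mapsto-n$ is conjugation by $\begin{bmatrix}1&0\\0&-1\end{bmatrix}$ --- and conjugate elements of $\GL_2(\Z)$ have equally many fixed points on $M(\F_p)$, the conjugating matrix acting as a bijection of $M(\F_p)$. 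Under $\GL_2(\Z)\cong\Out(F_2)$, this $g$ is the class of the automorphism $A\mapsto A$, $B\mapsto A^nB$, which fixes the conjugacy class of $A$; so on $M$ it fixes $x=\tr(A)$ and sends $(y,z)=(\tr B,\tr AB)$ to $(\tr(A^nB),\tr(A^{n+1}B))$. The Cayley--Hamilton recurrence $\tr(A^{m+1}B)=x\,\tr(A^mB)-\tr(A^{m-1}B)$ then shows that over a fixed $x_0\in\F_p$ the map $g$ acts \emph{linearly}, by $(y,z)^{T}\mapsto R(x_0)^{n}(y,z)^{T}$ with $R(x)=\begin{bmatrix}0&1\\-1&x\end{bmatrix}\in\SL_2$; one checks $R(x)\in\mathrm{O}(Q_x)$ for the form $Q_x(y,z)=y^2-xyz+z^2$, consistently with $g$ preserving the fibre $\{Q_{x_0}=-x_0^2\}$.

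Next I would analyse the fibre over each $x_0\in\F_p$: a fixed point there is a point $(y,z)\neq(0,0)$ with $Q_{x_0}(y,z)=-x_0^2$ and $(R(x_0)^n-I)(y,z)^{T}=0$, and I split into cases by the eigenvalues $\mu,\mu^{-1}$ of $R(x_0)$ (roots of $t^2-x_0t+1$). If $1$ is not an eigenvalue of $R(x_0)^n$, then $R(x_0)^n-I$ is invertible, forcing $(y,z)=(0,0)$, i.e.\ only the excluded point $(0,0,0)$. If $1$ is an eigenvalue of $R(x_0)^n$ but $R(x_0)^n\neq I$, then $R(x_0)^n$ is a nontrivial unipotent (both eigenvalues being $1$), which forces $R(x_0)$ to be non-semisimple, hence $x_0=\pm2$; a direct computation gives $\ker(R(x_0)^n-I)=\{y=\pm z\}$, on which $Q_{x_0}=(y\mp z)^2$ vanishes while $-x_0^2=-4\neq0$, so there are no fixed points. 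The only remaining case is $R(x_0)^n=I$, in which the whole fibre $\{Q_{x_0}=-x_0^2\}$ is fixed; this fibre has at most $2p$ points, since for each $z\in\F_p$ the equation is a monic quadratic in $y$.

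Finally I would bound the number of contributing fibres. Since $R(x_0)$ is never scalar, its minimal polynomial is $t^2-x_0t+1$, so $R(x_0)^n=I$ iff $t^2-x_0t+1\mid t^n-1$; equivalently $x_0=\mu+\mu^{-1}$ for some $\mu\in\overline{\F_p}$ with $\mu^n=1$. When $p\nmid n$ there are exactly $n$ such $\mu$, the value $\mu=\pm1$ does not occur (there $t^2-x_0t+1=(t\mp1)^2\nmid t^n-1$ by separability), and $\mu\mapsto\mu+\mu^{-1}$ is two-to-one, so at most $(n-1)/2<n$ values of $x_0$ arise; when $p\mid n$ the hypothesis $|n|\leq p$ forces $n=p$, and $t^2-x_0t+1\mid(t-1)^p$ forces $x_0=2$, a single value. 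In every case fewer than $n$ fibres contribute, each with at most $2p$ points, so $g$ has fewer than $2np=2|n|p$ fixed points on $M(\F_p)$.

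The step demanding the most care is the concrete description of the action on fibres together with the degenerate fibres: $x_0=\pm2$, where $\{Q_{x_0}=-x_0^2\}$ is a pair of parallel lines rather than a smooth conic; the fibre $x_0=0$; and the boundary case $n=p$ where $p\mid n$. These are exactly where a na\"ive count could mistake a degenerate fibre for a smooth one or miscount; none is deep, and since the target bound $2|n|p$ is generous --- a sharper count of the distinguished fibres would give roughly $|n|p$ --- the verification is routine once the degenerate cases are isolated.
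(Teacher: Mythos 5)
Your proof is correct, and while the global strategy coincides with the paper's (reduce to $\begin{bmatrix}1 & n\\ 0 & 1\end{bmatrix}$, fiber over the first coordinate, observe that on each fiber the action is the linear map $\begin{bmatrix}0 & 1\\ -1 & x_0\end{bmatrix}^{n}$, so that a fiber is either fixed pointwise or contributes nothing, and at most about $n$ fibers can be fully fixed), the implementation is genuinely different at the level of the key lemma. The paper delegates the fiberwise analysis to the quoted Bourgain--Gamburd--Sarnak parametrizations (Lemma~\ref{lem:BGS-lemmas}): the trichotomy parabolic/hyperbolic/elliptic in $x$, exact fiber sizes $p\mp1$, and the identification of the action with multiplication by $w\in\F_p^*$ or by a norm-one $v\in\F_{p^2}^*$, summing $(p\mp1)\cdot\#\{w^n=1\}$ to get $2np$. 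You replace all of this with self-contained $\SL_2(\F_p)$ linear algebra: invertibility of $R(x_0)^n-I$ in the generic case, the semisimplicity argument forcing $x_0=\pm2$ in the unipotent case (where the kernel line is isotropic for $Q_{x_0}$, so no fixed points), and the criterion $t^2-x_0t+1\mid t^n-1$ for a fully fixed fiber, bounded via the two-to-one map $\mu\mapsto\mu+\mu^{-1}$. What your route buys: it needs no citation, the crude ``monic quadratic in $y$ for each $z$'' bound of $2p$ per fiber is robust at the degenerate fiber $x_0=0$ (where the quoted count $|C(x)|=p-1$ is actually not literally correct, though the paper's final bound survives because $w=\pm i$ are counted twice), it treats the boundary case $n=p$ explicitly (the paper's write-up carries the hypothesis $n<p$ through its final summation even though the proposition allows $|n|\le p$), and it even yields the slightly sharper bound of roughly $(n-1)p$. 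What the paper's route buys is the exact orbit structure of $\rot$ on each conic, which is reused conceptually elsewhere and ties the argument to \cite{BGS}.
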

\begin{proof}
Consider the ``rotation" $\rot=\begin{bmatrix}1 & 1\\
0 & 1
\end{bmatrix}$, following the notation of Bourgain-Gamburd-Sarnak (\cite{BGS}[2, eq. (3)).
From the matrices in Section~\ref{sec:fricke}, especially,
\[
[m_3] = \begin{bmatrix} 1 & 0 \\ 0 & -1 \end{bmatrix}, [\tau_{23}] = \begin{bmatrix} 1 & -1 \\ 0 & -1 \end{bmatrix}
\]
we see that $\rot = [\tau_{23}] \circ [m_3]$. Again, working in $\PGL_2$, this is to be understood modulo sign.
This element $\rot:(x,y,z)\to(x,xy-z,y)$ thus combines a Markoff move on the third coordinate with a transposition of the second and third coordinates. 
Although $\rot$ itself is not a word in the Markoff moves, we do have
\begin{equation} \label{eqn:rot-square}
\rot^2 = m_2 \circ m_3
\end{equation}
as one sees using $\rot = \tau_{23} \circ m_3$ and $m_2 = \tau_{23} \circ m_3 \circ \tau_{23}$, or observing that both sides send $(x,y,z)$ to $(x, x(xy-z)-y, xy-z)$. 
We will prove Proposition~\ref{prop:parabolic} by carefully examining the orbit structure of $\rot$ on $M(\F_{p})$.
For any fixed $x \in \F_p$, the Markoff equation defines a conic section
\begin{equation}
C(x): \quad y^2 - xyz + z^2 = -x^2.
\end{equation}
Since $\rot$ does not change the first coordinate of $(x,y,z)$, it preserves each of these conics. Its action is given by
\begin{equation}
\rot \begin{bmatrix} y \\ z \end{bmatrix} = \begin{bmatrix} z \\ xz - y \end{bmatrix} = \begin{bmatrix} 0 & 1 \\ -1 & x \end{bmatrix} \cdot \begin{bmatrix} y \\ z \end{bmatrix}
\end{equation}
If $x = \pm 2$, then $C(x)$ degenerates to
\[
C(\pm 2): \quad \left( \frac{y \mp z}{2} \right)^2 = - 1
\]
which is either empty if $p \equiv 3 \bmod 4$ or a pair of lines if $p \equiv 1$. Hence
\[
\# C(\pm 2) = p \left( 1 + \legendre{-1}{p} \right).
\]
For the remaining values of $x$, we have
\[
\# C(x) = p - \legendre{x^2-4}{p}
\]
as we will see by an explicit parametrization. It can also be shown by direct manipulations with the Legendre symbol.

One can think of the conic sections $C(x)$ either as ellipses or hyperbolas modulo $p$ according to whether $x^2-4$ is a square. Following \cite{BGS}, we say $x\in\F_{p}$ is \emph{hyperbolic} if $x^{2}-4$ is a nonzero square in $\F_{p}$. We say $x\in\F_{p}$ is \emph{elliptic} if $x^{2}-4$ is nonzero and not a square. We say $x\in\F_{p}$ is \emph{parabolic} if $x^{2}-4=0$, i.e. $x=\pm2$. 
Note that the parabolic case only arises for $p \equiv 1 \bmod 4$, and that the conic section in such a case is not a parabola but something degenerate.
The behavior of $\rot$ on $C(x)$ was described by Bourgain, Gamburd, and Sarnak in \cite{BGS} using this classification of values of $x$. They state their results for the surface $X^2 + Y^2 + Z^2 = 3XYZ$, although in many of the proofs they use the same normalization $x^2 + y^2 + z^2 = xyz$ as in the present article. The two surfaces are equivalent over $\F_p$ for $p \geq 5$ by a scaling $(X,Y,Z) = (x,y,z)/3$, and we review the corresponding parts of \cite{BGS} for the reader's convenience.

A convenient change of variable toward parametrizing $C(x)$ is
\begin{equation} \label{eqn:xi}
x = \xi + \xi^{-1}
\end{equation}
where $\xi \neq 0$ lies in $\F_p$ if $x^2-4$ is a square, and otherwise in a quadratic extension $\F_{p^2}$.
Let
\begin{equation} \label{eqn:kappa}
\kappa = \kappa(x) = \frac{x^2}{x^2 - 4} = \left( \frac{ \xi + \xi^{-1} }{\xi - \xi^{-1}} \right)^2.
\end{equation}
Then
\[
(x,y,z) = \left(x, t + \frac{\kappa}{t}, t\xi + \frac{\kappa}{t\xi} \right)
\]
solves the Markoff equation for any $t \neq 0$.
Note that multiplying (\ref{eqn:xi}) by $\xi$ gives $\xi^2 - x\xi + 1 = 0$, and this equation simplifies the verification that $(x,t+\kappa t^{-1}, t\xi + \kappa t^{-1}\xi^{-1} )$ solves the Markoff equation.
The action of $\rot$ is to multiply the parameter $t$ by $\xi$. Indeed, from the definition $x = \xi + \xi^{-1}$, we calculate that
\begin{align*}
\rot \left( t + \frac{\kappa}{t}, t\xi + \frac{\kappa}{t\xi} \right) &= \left( t\xi + \frac{\kappa}{t\xi} , xt\xi + \frac{\kappa x}{t\xi} - t - \frac{\kappa}{t} \right) \\
&= \left( t\xi + \frac{\kappa}{t\xi} , t\xi^2+t + \frac{\kappa (1 + \xi^{-2})}{t} - t - \frac{\kappa}{t}  \right) \\
&= \left( t\xi + \frac{\kappa}{t\xi}, t\xi^2 + \frac{\kappa}{t\xi^2} \right)
\end{align*}
that is, $t$ has been multiplied by $\xi$. These considerations can be summarized in the following lemma, due to Bourgain-Gamburd-Sarnak \cite{BGS}.
\begin{lemma} (Bourgain-Gamburd-Sarnak) \label{lem:BGS-lemmas}
\begin{itemize}
\item
(\cite{BGS}, Lemma 3) Let $x$ be parabolic. If $p\equiv3\bmod4$ then $C(x)$ is empty. If $p\equiv1\bmod4$ then $C(x)$ consists of two lines. Letting $i$ be such that $i^{2}\equiv-1\bmod p$, the conic sections are parametrized by
\begin{align*}
C(2) &= (2,t,t \pm 2i) \\
C(-2) &= (-2,t,-t \pm 2i).
\end{align*} 
The action of $\rot$ is given by 
\begin{align*}
\rot(2,t,t\pm2i) &= (2,t\pm2i,t\pm4i), \\
\rot(-2,t,-t\pm2i) &= (-2,-t\pm2i,t\mp4i).
\end{align*}
\item
(\cite{BGS}, Lemma 4) Let $x$ be hyperbolic. Write $x=w+w^{-1}$ with $w\in\F_{p}^{*}$. Let
\[
 \kappa(x)=\frac{x^{2}}{x^{2}-4}.
\]
Then $C(x)$ is parametrized by $\F_{p}^{*}$ via the map 
\[
t\in\F_{p}^{*} \mapsto \left(x,t+\frac{\kappa(x)}{t},tw+\frac{\kappa(x)}{tw}\right).
\]
As a consequence, $|C(x)|=p-1$. After this identification, $\rot$ acts on $C(x)\cong\F_{p}^{*}$ by multiplication by $w$. 
\item
(\cite{BGS}, Lemma 5) Let $x$ be elliptic. Write $x=v+v^{-1}$ where $v\in\F_{p^{2}}-\F_{p}$ and $v^{p+1}=1$. Let $\kappa(x)=\frac{x^{2}}{x^{2}-4}$. Let $E(x)\subset\F_{p^{2}}$ be the set of $t$ such that $t^{p+1}=\kappa(x)$. Then $C(x)$ is parametrized by $E(x)$ via the map
\[
t\in E(x)\mapsto\left(x,t+\frac{\kappa(x)}{t},tv+\frac{\kappa(x)}{tv}\right).
\]
As a consequence, $|C(x)|=p+1$. After this identification, $\rot$ acts on $C(x)\cong E(x)$ by multiplication by $v$. 
\end{itemize}
\end{lemma}
\end{proof}

\emph{Proof of Proposition \ref{prop:parabolic}.}
By multiplying by $-I$, taking inverses, or conjugating by $\begin{bmatrix}0 & 1\\
1 & 0
\end{bmatrix}$, all the matrices of the proposition can be brought into the form $\begin{bmatrix}1 & n\\
0 & 1
\end{bmatrix}$ where $n>0$. 
None of these operations change the number of fixed points of $g$ on $M(\F_{p})$, or the bound for the number of fixed points claimed in the lemma. So it suffices to prove the proposition for $\begin{bmatrix}1 & n\\
0 & 1
\end{bmatrix}$.
This matrix acts on $M(\F_{p})$ by $\rot^{n}$. We split up the fixed points of $\rot^{n}$ depending on whether they belong to $C(x)$ with $x$ parabolic, hyperbolic, or elliptic.
If $x$ is parabolic, Lemma~\ref{lem:BGS-lemmas} implies that for $n<p$, $\rot^{n}$ has no fixed points on $C(x)$.
For fixed hyperbolic $x$, let $x=w+w^{-1}$ as in Lemma~\ref{lem:BGS-lemmas}. Lemma~\ref{lem:BGS-lemmas} implies that $\rot^{n}$ has a fixed point in $C(x)$ if and only if $w^{n}=1$, and this happens if and only if every element of $C(x)$ is fixed by $\rot^{n}$. The number of fixed points of $\rot^{n}$ contained in $C(x)$ with $x$ hyperbolic is therefore bounded by 
\[
\sum_{w\in\F_{p}^{*}:w^{n}=1}|C(w+w^{-1})|=(p-1)|\{w\in\F_{p}^{*}:w^{n}=1\}|\leq(p-1)n.
\]
When $x$ is elliptic, a similar argument using Lemma~\ref{lem:BGS-lemmas} shows that the number of fixed points of $\rot^{n}$ contained in $C(x)$ is bounded by
\[
\sum_{ \substack{v \in \F_{p^{2}}-\F_{p} : \\ v^{p+1}=1,v^{n}=1 } }|C(v+v^{-1})|	=(p+1)|\{v\in\F_{p^{2}}-\F_{p}:v^{p+1}=1,v^{n}=1\}| \leq(p+1)n.
\]
Therefore, when $n<p$, adding our previous bounds together, $\rot^{n}$ has at most $2pn$ fixed points on $M(\F_{p})$. This concludes the proof. \qed

We have used the bound that there are at most $n$ solutions to $w^n = 1$, as a polynomial cannot have more roots than its degree. For many values of $n$, the only solution is $w=1$. Extra solutions arise only if $n$ and $p-1$ have a common factor. This is related to the difficulties encountered in \cite{BGS} when $p^2-1$ has many factors.

\subsection{Proof of Theorem~\ref{thm:fixed-points}.}
Consider any $g \neq 1$ in the Markoff group $G \leq \PGL_2(\Z)$. Let $h = g^K$ where $K \leq 8$ is the power from Lemma~\ref{lem:power}. Any fixed point of $g$ is also a fixed point of its powers, so it suffices to bound the number of fixed points of $h$ on $M(\F_p)$. Note that if $g$ is represented by $\begin{bmatrix} a & b \\ c & d \end{bmatrix}$ in $\GL_2(\Z)$, and $h$ by $\begin{bmatrix} A & B \\ C & D \end{bmatrix}$, then
\[
\max(|A|,|B|,|C|,|D|) \leq 128 \max(|a|,|b|,|c|,|d|)^8.
\]
Indeed, we have
\[
\begin{bmatrix} a & b \\ c & d \end{bmatrix}^2 = \begin{bmatrix} a^2+bc & ab+bd \\ ac+dc & d^2+bc \end{bmatrix}
\]
so that the entries of $g^2$ are at most $2\max(|a|,|b|,|c|,|d|)^2$. 
By repeated squaring, the entries of $g^{2^n}$ are at most $2^{2^n-1} \max(|a|,|b|,|c|,|d|)^{2^n}$.
In particular, for the eighth power we have the bound $128 \max(|a|,|b|,|c|,|d|)^8$, as claimed.
We have assumed that $\max(|a|,|b|,|c|,|d|) \leq (p/128)^{1/8}$ for the express purpose of ensuring that the entries of $h$ are at most $p$. Thus we may apply Proposition~\ref{prop:parabolic}.
This implies that if $h$ is a standard parabolic element, then it has at most 
\[
2p\max(|A|,|B|,|C|,|D|) \leq 256 p \max(|a|,|b|,|c|,|d|)^8
\]
fixed points.
Otherwise, one of the other cases in Lemma~\ref{lem:power} pertains.
In the torsion case, $g$ has at most $p$ fixed points by Lemma~\ref{lem:torsion-fix}. 
This is smaller than the previous bound because
\[
1 \leq \max(|A|,|B|,|C|,|D|)
\]
since the entries of $h$ are integers, not all zero.
In the generic case, Lemma~\ref{lem:cgmp} shows that the number of fixed points of $h$ is at most
\[
2p\big( |A| + |B| + | |C| - |D| | \big) \leq 8p \max(|A|,|B|,|C|,|D|).
\]
Thus in all cases, $h$ has at most
\[
8p \max(|A|,|B|,|C|,|D|) \leq 1024 p \max(|a|,|b|,|c|,|d|)^8
\]
fixed points, and therefore so does $g$.

\section{Proof of the Kesten-McKay Law} \label{sec:wrap-up}

In this section, we prove Theorem~\ref{thm:logp}.
Let $A$ be the adjacency matrix of the Markoff graph and $\lambda_j$ its eigenvalues.
By definition of the empirical measure $\mu_p = \sum \delta_{\lambda_j}$, we have
\[
\int x^L d\mu_p(x) = \sum_j \lambda_j^L = \tr(A^L).
\]
On the other hand, expanding the trace as in Section~\ref{sec:graph} gives
\[
\tr(A^L) =  \sum_{j_1} \cdots \sum_{j_L} a_{j_1,j_2}a_{j_2,j_3} \ldots a_{j_L,j_1}
\]
The product $a_{j_1,j_2}a_{j_2,j_3} \ldots a_{j_L,j_1}$ is 0 unless there is a cycle
\[
j_1 \rightarrow j_2 \rightarrow \ldots \rightarrow j_L \rightarrow j_1
\]
where each arrow represents a Markoff move $m_1, m_2$, or $m_3$. In such a case the product is 1 and the vertex labeled $j_1$ is fixed by some word of length $L$. The trace is obtained by summing over all words
\[
\tr(A^L) = \sum_w \Fix(w) = \sum_{i_1} \cdots \sum_{i_L} \Fix(m_{i_1}\cdots m_{i_L}) 
\]
where $\Fix(w)$ denotes the number of fixed points of $w$ acting on $M(\F_p)$, and the indices $i_1, \ldots, i_L$ take the values 1, 2, 3.
The words that reduce to the identity fix all of $M(\F_p)$ and contribute the main term:
\[
\sum_{\substack{j_1, \ldots j_L \text{s.t.} \\ m_{j_1}\cdots m_{j_L}=1 } } |M(\F_p)| = |M(\F_p)| \int x^L d\rho_3(x) = (p^2 \pm 3p)\int x^L d\rho_3
\]

From the combinatorial interpretation noted in Section~\ref{sec:graph}, the Kesten-McKay moment $\int x^L d\rho_3$ is exactly this count of paths in a tree returning to the starting point.
We will use Theorem~\ref{thm:fixed-points} to show that the remaining words make a negligible contribution, together with the following preparations.
If
\[
g = \begin{bmatrix}
a & b \\
c & d
\end{bmatrix}
\]
is a word of length $L$ in the generators $m_1$, $m_2$, $m_3$, then the entries $a, b, c, d$ are at most exponential in $L$. As an explicit upper bound, we have
\begin{proposition}
The entries of a word of length $L$ in the Markoff moves $m_1,m_2,m_3$ are at most $4^L$ in absolute value.
\label{prop:bound-entries}
\end{proposition}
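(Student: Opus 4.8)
The plan is to bound the entries of an arbitrary length-$L$ word by induction on $L$, tracking the maximum absolute value of the entries as each generator is applied on the left (or right). The three generator matrices in (\ref{eqn:markoff-matrices}) are
\[
[m_1] = \begin{bmatrix} 1 & 0 \\ 2 & -1 \end{bmatrix}, \quad
[m_2] = \begin{bmatrix} -1 & 2 \\ 0 & 1 \end{bmatrix}, \quad
[m_3] = \begin{bmatrix} -1 & 0 \\ 0 & 1 \end{bmatrix},
\]
each of which has entries bounded by $2$. The key observation is that multiplying a matrix $\begin{bmatrix} a & b \\ c & d \end{bmatrix}$ by any one of these on the left produces a matrix whose entries are each an $\Z$-linear combination of $a,b,c,d$ with coefficients summing in absolute value to at most $3$ (for $m_1$: the new rows are $(a,b)$ and $(2a-c,\,2b-d)$; for $m_2$: $(-a+2c,\,-b+2d)$ and $(c,d)$; for $m_3$: $(-a,-b)$ and $(c,d)$). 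Hence each entry of the product has absolute value at most $3M$ where $M$ is the maximum absolute value of the entries of $\begin{bmatrix} a & b \\ c & d \end{bmatrix}$.

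Carrying out the induction: a word of length $1$ has entries bounded by $2 \le 4$. If a word $g$ of length $L-1$ has entries bounded by $4^{L-1}$, then $m_i g$ has entries bounded by $3 \cdot 4^{L-1} \le 4^L$. So every word of length $L$ has entries bounded by $4^L$, which is the claimed bound. (In fact the crude factor-$3$ recursion gives the stronger bound $2 \cdot 3^{L-1}$, but $4^L$ is clean and amply sufficient.)

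There is essentially no obstacle here — the statement is a routine consequence of the explicit generators, and the only thing to be careful about is the exact bookkeeping of which linear combinations appear when each $m_i$ acts, and confirming that the worst case among the three generators is the factor of $3$ rather than something larger. One should also note that the bound is stated for a representative matrix in $\GL_2(\Z)$; since we are really working in $\PGL_2(\Z)$, replacing $g$ by $-g$ does not change the entrywise bound, so the choice of sign of the representative is immaterial. This bound is exactly what is needed to feed into Theorem~\ref{thm:fixed-points} in the next section: a word of length $L \le \frac{1}{8}\log_4(p/128)$ then has matrix entries at most $(p/128)^{1/8}$, so the hypothesis of Theorem~\ref{thm:fixed-points} is met and each nontrivial word contributes $O(4^{8L} p) = O(C^L p)$ fixed points for a suitable absolute constant $C$.
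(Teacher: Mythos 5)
Your proof is correct and follows essentially the same approach as the paper's: induction on the word length, peeling off one generator at a time and bounding the growth of the entries. The only difference is bookkeeping — the paper uses the generic estimate that each entry of a $2\times 2$ product is a sum of two terms, giving a factor $2\cdot 2=4$ per step, while you exploit the specific form of the generators to see that each row's coefficients sum (in absolute value) to at most $3$, giving the marginally sharper $2\cdot 3^{L-1}$ before relaxing to $4^L$. Both arguments are equally valid, and both land on the bound actually used downstream in Theorem~\ref{thm:fixed-points}.
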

\begin{proof}
The generators themselves have entries of absolute value at most 2, namely
\[
[m_1] = \begin{bmatrix}
1 & 0 \\
2 & -1
\end{bmatrix},
[m_2] =
\begin{bmatrix}
-1 & 2 \\
0 & 1
\end{bmatrix},
[m_3] = \begin{bmatrix}
-1 & 0 \\
0 & 1
\end{bmatrix}
\]
from equation (\ref{eqn:markoff-matrices}).
This confirms the base case $L=1$ (and would even allow a better exponential rate than $4^L$). For the induction step, consider
\[
\begin{bmatrix} a_{k+1} & b_{k+1} \\ c_{k+1} & d_{k+1} \end{bmatrix}
=
\begin{bmatrix} a & b \\ c & d \end{bmatrix} 
\begin{bmatrix} a_k & b_k\\ c_k & d_k \end{bmatrix} 
=
\begin{bmatrix}
aa_k + bc_k & ab_k+bd_k \\ ca_k + dc_k & cb_k + dd_k
\end{bmatrix}
\]
We have $\max(|a|,|b|,|c|,|d|) \leq 2$ from the base case, $\max(|a_{k}|,|b_{k}|,|c_{k}|,|d_{k}|) \leq 4^k$ from the induction hypothesis, and therefore
\[
\max(|a_{k+1}|,|b_{k+1}|,|c_{k+1}|,|d_{k+1}|) \leq 2 \cdot 4^k + 2 \cdot 4^k = 4^{k+1}.
\]
\end{proof}

\begin{corollary}
There is an absolute exponent $\alpha > 0$ such that if $g \in \GL_2(\Z)$ with $|\tr(g)|>2$ is a word of length $L$ in the matrices representing the Markoff moves $m_1,m_2,m_3$, then $g$ has at most $e^{\alpha L}p$ fixed points.
\end{corollary}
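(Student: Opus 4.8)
The plan is to combine Proposition~\ref{prop:bound-entries} (the entries of a length-$L$ word in the $[m_i]$ are at most $4^L$ in absolute value) with Theorem~\ref{thm:fixed-points}, falling back on the trivial bound by $|M(\F_p)|$ in the regime where $p$ is too small for the size hypothesis of Theorem~\ref{thm:fixed-points} to apply. Note first that $|\tr(g)|>2$ forces $g\neq I$, since $\tr(I)=2$; in particular $L\geq 1$. This is the only role of the trace hypothesis (Theorem~\ref{thm:fixed-points} itself handles torsion elements as well).

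Write $g=\begin{bmatrix} a & b \\ c & d \end{bmatrix}$, so that $\max(|a|,|b|,|c|,|d|)\leq 4^L$ by Proposition~\ref{prop:bound-entries}. I would then split into two cases. If $4^L\leq (p/128)^{1/8}$, then the hypothesis of Theorem~\ref{thm:fixed-points} is satisfied, and that theorem bounds the number of fixed points of $g$ on $M(\F_p)$ by $1024\,p\,\max(|a|,|b|,|c|,|d|)^8\leq 1024\,(4^L)^8\,p=2^{16L+10}\,p$. If instead $4^L> (p/128)^{1/8}$, equivalently $p<128\cdot 2^{16L}$, I would use only that the fixed-point set of $g$ is a subset of the vertex set, so its size is at most $|M(\F_p)|=p^2\pm 3p<2p^2<256\cdot 2^{16L}\,p<2^{16L+10}\,p$.

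In either case $g$ has at most $2^{16L+10}\,p$ fixed points, and since $2^{16L+10}\leq e^{\alpha L}$ for every $L\geq 1$ as soon as $\alpha\geq 26\log 2$, the corollary follows with (the deliberately non-optimized value) $\alpha=26\log 2$.

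There is no genuine obstacle here: essentially all the work has already been carried out in Theorem~\ref{thm:fixed-points}. The one point requiring a little care is that the entry-size restriction in Theorem~\ref{thm:fixed-points} is a real constraint, so the corollary is not a one-line citation; but in precisely the range that the theorem excludes, the claimed bound $e^{\alpha L}p$ already exceeds the total number $p^2\pm 3p$ of vertices, so the trivial bound more than suffices there.
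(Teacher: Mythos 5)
Your proof is correct and follows the paper's route: bound the entries of $g$ by $4^L$ via Proposition~\ref{prop:bound-entries}, then invoke Theorem~\ref{thm:fixed-points} to get at most $1024\,p\,(4^L)^8 = 2^{16L+10}p$ fixed points, and read off $\alpha = 26\log 2$. The one thing you do that the paper does not is the explicit case split on whether $4^L \leq (p/128)^{1/8}$. The paper simply applies Theorem~\ref{thm:fixed-points} and asserts the result ``for all $L\geq 1$,'' without remarking that the theorem carries an entry-size hypothesis which, for $L$ large relative to $\log p$, need not hold. You are right that this is a real constraint on a standalone reading of the corollary, and your fallback to the trivial bound $|M(\F_p)| = p^2\pm 3p \leq 2p^2 < 256\cdot 2^{16L}p < 2^{16L+10}p$ in the excluded range is exactly the right way to close it; the arithmetic checks out. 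Your remark that the hypothesis $|\tr(g)|>2$ serves only to force $g\neq I$ (Theorem~\ref{thm:fixed-points} already covers torsion) is also accurate. So this is not a different approach, but a slightly more careful and strictly complete version of the paper's own argument.
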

\begin{proof}
By the previous Proposition~\ref{prop:bound-entries}, the entries of $g$ are at most $4^L$ in absolute value. Combining this with Theorem~\ref{thm:fixed-points}, we find that the number of fixed points of $g$ is at most
\[
1024 p (4^L)^8 = p 2^{16L+10}.
\]
Thus we can take $\alpha = 26\log{2} = 18.0218\ldots $ and have the result for all $L \geq 1$.
\end{proof}

Trivially, there are at most $3^L$ words $m_{j_1} \cdots m_{j_L}$ of length $L$ since each index is either 1, 2, or 3. This bound would be enough for our purposes, but a better one is easy enough to come by. The number of words of length $L$ in $\Z/2 * \Z/2 * \Z/2$, or equivalently the number of points at distance $L$ from the root of a 3-regular tree, is $3 \times 2^{L-1}$.
Using the previous corollary over each of these terms leads to
\[
\sum_{i_1} \cdots \sum_{i_L} \Fix(m_{i_1}\cdots m_{i_L}) \leq (3 \times 2^{L-1}) \times (p \times 2^{16L+10})
\]
where the sum is over the remaining words, that is, those that do not reduce to the identity.
Combining this with the main term from the words that do reduce to 1, we have
\[
\tr(A^L) = |M(\F_p)| \int x^L \rho_3(x) dx + O\big( 2^{17L} p \big)
\]
where the implicit constant could be taken as $3 \times 2^9 = 1536$ independent of both $p$ and $L$.
We have $|M(\F_p)| = p^2 \pm 3p$, and normalizing by $p^2$ gives
\[
\int x^L d\mu_p(x) = \int x^L \rho_3(x) dx + O\left( \frac{2^{17L}}{p} \right).
\]
The error term is negligible provided that
\[
L - \frac{\log{p}}{17\log{2}} \rightarrow - \infty.
\]
This allows for $L \sim c\log{p}$ for a sufficiently small $c > 0$, namely
\[
c < \frac{1}{17 \log{2}} = 0.084864\ldots
\]
and one could also take, for instance, $L \sim \frac{1}{17\log{2}} \log{p} - \sqrt{\log{p}}$.

\section{Proof of Corollary~\ref{cor:discrepancy}} \label{sec:discrepancy}

Corollary~\ref{cor:discrepancy} compares the measure of an interval under the empirical distribution of eigenvalues as against the limiting Kesten-McKay law, whereas Theorem~\ref{thm:logp} gives information about moments. 
A natural bridge between these is to approximate the given interval's indicator function by polynomials.
The fact that Theorem~\ref{thm:logp} only allows us to take on the order of $\log{p}$ moments is a handicap compared to, say, having estimates for the Fourier transform $\mu_p(e^{2\pi i \xi x})$. 
There are standard arguments to pass from moments to discrepancy, and in particular Gamburd-Jakobson-Sarnak faced the same problem in a setting very close to ours \cite{GJS}. What we state below as Lemma~\ref{lem:gjs} is a summary of facts given in equation (55) of (\cite{GJS}, Proof of Theorem 1.3). It is based on Selberg polynomials after Selberg (\cite{S}, p.213-219) and Vaaler \cite{V}, as we discuss below. We also recommend Montgomery's treatment (\cite{Mon}, p. 5-15). 

\begin{lemma} \label{lem:gjs}
(Gamburd-Jakobson-Sarnak, after Selberg and Vaaler)
For any interval $I \subseteq [-1,1]$ and $m \in \N$, there exist polynomials $f_{m}^{\pm}$ of degree $m$ such that
\begin{itemize}
\item
$f_{m}^{-} \leq \chi_{I} \leq f_{m}^{+}$ on $[-1,1]$,
\item
There is an absolute constant $B>0$, independent of $I$, such that the coefficients of $f_{m}^{\pm}$ have absolute value $\leq B^{m}$.
\item
\[
\int_{-\frac{1}{\sqrt{2}}}^{\frac{1}{\sqrt{2}}}	(f_{m}^{+}-f_{m}^{-})(y)dy=O\left(\frac{1}{m}\right).
\]
\end{itemize}
\end{lemma}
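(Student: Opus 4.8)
The plan is to build $f_m^{\pm}$ from the classical Selberg one-sided trigonometric approximations on the circle $\R/2\pi\Z$ and to pull them back to $[-1,1]$ through the substitution $x=\cos\theta$, i.e.\ by expanding in Chebyshev polynomials. The only genuinely analytic ingredient is the Beurling--Selberg extremal function -- exactly what \cite{S} and \cite{V} supply and what \cite{GJS} use in the same way -- which I would quote rather than reprove; everything downstream of it is bookkeeping.

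First, recall the circle version: for a union $U$ of at most two arcs in $\R/2\pi\Z$ and any $m\in\N$ there are real trigonometric polynomials $\psi_m^{\pm}$ of degree $\le m$ with $\psi_m^{-}\le\chi_U\le\psi_m^{+}$ pointwise and $|\widehat{\psi_m^{\pm}}(n)-\widehat{\chi_U}(n)|\le C_0/(m+1)$ for every $n\in\Z$, with $C_0$ absolute (the case of one arc is the standard Selberg/Vaaler statement; two arcs follow by adding two copies of it). Since $|\widehat{\chi_U}(n)|\le 1$ for all $n$, this forces $|\widehat{\psi_m^{\pm}}(n)|=O(1)$ uniformly in $U$, $m$, $n$; and the $n=0$ term gives $\tfrac1{2\pi}\int_{-\pi}^{\pi}(\psi_m^{+}-\psi_m^{-})\,d\theta=O(1/m)$.

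Given $I\subseteq[-1,1]$, set $U=\{\theta:\cos\theta\in I\}$; because $\cos$ is even and strictly monotone on $[0,\pi]$, $U$ is a symmetric union of at most two arcs. Replacing $\psi_m^{\pm}(\theta)$ by $\tfrac12\big(\psi_m^{\pm}(\theta)+\psi_m^{\pm}(-\theta)\big)$ -- legitimate because $\chi_U(-\theta)=\chi_U(\theta)$ -- preserves the majorant/minorant property, the Fourier estimates, and the degree, and makes $\psi_m^{\pm}$ even, hence of the form $\sum_{k=0}^{m}a_k^{\pm}\cos k\theta$ with $|a_k^{\pm}|=O(1)$. Now put $f_m^{\pm}(x)=\sum_{k=0}^{m}a_k^{\pm}T_k(x)$, with $T_k$ the Chebyshev polynomial, so that $f_m^{\pm}(\cos\theta)=\psi_m^{\pm}(\theta)$. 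For $x=\cos\theta\in[-1,1]$ the sandwich $\psi_m^{-}\le\chi_U\le\psi_m^{+}$ becomes $f_m^{-}\le\chi_I\le f_m^{+}$, since $\chi_U(\theta)=\chi_I(\cos\theta)$; that is the first bullet. Because the coefficients of $T_k$ are bounded by $3^k$ in absolute value (immediate from $T_{k+1}=2xT_k-T_{k-1}$), each coefficient of $f_m^{\pm}$ is at most $\big(\max_k|a_k^{\pm}|\big)\sum_{k\le m}3^k=O(3^m)\le B^m$ for a suitable absolute $B$; that is the second bullet. Finally, substituting $x=\cos\theta$ and using evenness,
\[
\int_{-1}^{1}\big(f_m^{+}-f_m^{-}\big)(x)\,\frac{dx}{\sqrt{1-x^2}}=\frac12\int_{-\pi}^{\pi}\big(\psi_m^{+}-\psi_m^{-}\big)(\theta)\,d\theta=O\!\left(\frac1m\right),
\]
and since $f_m^{+}-f_m^{-}\ge 0$ on $[-1,1]$ while $(1-x^2)^{-1/2}\ge 1$ there, replacing the weight by $1$ and shrinking the domain to $[-1/\sqrt2,1/\sqrt2]$ only decreases the integral; that is the third bullet.

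The one step that is not routine is the circle version, i.e.\ the existence of Selberg's one-sided trigonometric approximations with the sharp $1/(m+1)$ rate on \emph{every} Fourier coefficient (equivalently, the Beurling--Selberg function), which we import. After that, passing to an arc, symmetrizing, substituting Chebyshev, estimating coefficients, and discarding the weight are all mechanical. I expect the only mildly fussy point in a careful write-up is citing the circle statement in precisely the form needed -- for a union of two arcs, with uniform control on all Fourier coefficients rather than only the mean -- which is exactly why it is cleanest to follow \cite{GJS} and present the lemma as a packaged black box.
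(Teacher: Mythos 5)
Your proof is correct and takes essentially the same route as the paper: import the Beurling--Selberg one-sided trigonometric approximants (via Selberg/Vaaler), pull back to the interval via $x=\cos\theta$, expand in Chebyshev polynomials, and estimate coefficients. The one cosmetic difference is how the preimage of $I$ under $\cos$ is handled: you treat it directly as a symmetric union of (at most) two arcs and symmetrize, whereas the paper sidesteps the two-arc issue by rescaling so that $I$ sits inside a short subinterval near the center, making the relevant $\theta$-range a single short arc. Both devices are sound; yours is marginally more self-contained, while the paper's choice dovetails with how the lemma is later invoked (with $K=3\sqrt{2}$, so that $I\subseteq[-1/\sqrt2,1/\sqrt2]$). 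You also note, correctly, that the weight $(1-x^2)^{-1/2}\ge 1$ on $[-1,1]$ lets you drop the Chebyshev weight and even integrate over all of $[-1,1]$ rather than just $[-1/\sqrt2,1/\sqrt2]$, giving a slightly stronger third bullet than stated.
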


Approximation by polynomials is closely related to approximation by \emph{band-limited} functions. These are functions $f$ on the real line whose Fourier transform $\widehat{f}$ is supported in an interval $[-L,L]$.
The fact that $f$ does not contain frequencies higher than $L$ is analogous to a polynomial of degree $L$ not involving powers higher than $x^L$. 
Note that, by Poisson summation,
\[
\sum_{n \in \Z} f(x+n) = \sum_{\nu \in \Z} \widehat{f}(\nu) e^{2\pi i \nu x}.
\]
If $\widehat{f}$ is supported in $[-L,L]$, this means that $\sum_n f(x+n)$ is a trigonometric polynomial of degree at most $L$.
Beurling and Selberg studied the problem of majorizing or minorizing a step function by band-limited functions, and their solutions are optimal in the sense of minimizing $L^1$ distance.
Periodizing the Beurling-Selberg functions leads to polynomials on the unit circle, which we identify with the interval $[-3,3]$ in our problem.

Beurling's (unpublished) function $b$ majorizes the signum function $\text{sgn}$, has $\supp \widehat{b} \subseteq [-1,1]$, and minimizes $\int_{\R} b - \text{sgn}$ among all such majorants. It is given explicitly by
\[
b(z) = \left( \frac{\sin{\pi z} }{\pi } \right)^2 \left(\frac{2}{z} + \sum_{n=0}^{\infty} \frac{1}{(z-n)^2} - \sum_{-\infty}^{-1} \frac{1}{(z-m)^2} \right)
\]
and achieves $\int b - \text{sgn} = 1$. 
Note that the factor $\sin(\pi z)^2$ cancels the poles of the other factor, so that $b(z)$ is entire. 
Increasing the bandwidth $L$ leads to a more accurate approximation with $\int b_L - \text{sgn} = 1/L$.
Indeed, let $b_L(x) = b(Lx)$ so that $\widehat{b_L}(\xi) = L^{-1} \widehat{b}(\xi/L)$ is supported in $[-L,L]$.
Since $\text{sgn}(Lx) = \text{sgn}(x)$, we have
\[
\int b_L - \text{sgn} = \frac{1}{L} \int b - \text{sgn} = \frac{1}{L}.
\]

Selberg's majorant must lie above the indicator $\chi_{[\alpha,\beta]}$ rather than the signum function, which can be arranged from Beurling's function using
\[
\chi_{[\alpha,\beta]}(x) = \frac{1}{2} \text{sgn}(x-\alpha) - \frac{1}{2} \text{sgn}(\beta - x).
\]
Thus
\[
S^+(z) = \frac{1}{2} b(z - \alpha) + \frac{1}{2} b(\beta - z)
\]
which obeys $\int S^+ = \beta - \alpha + 1$.
This is optimal when $\beta - \alpha$ is an integer, but not in general.
Selberg also produces a minorant $S^-$, with similar properties except $S^- \leq \chi_{[\alpha, \beta]}$, now an underestimate of $\chi_{[\alpha, \beta]}$.
Increasing the bandwidth to $[-L,L]$ instead of $[-1,1]$ leads to $\int |S_{\pm} - \chi | = 1/L$.

To produce the polynomials we need on $[-3,3]$, we change variables to $\lambda_j = K \cos(2\pi x_j)$. One could take $K=3$ and $0 \leq x_j \leq \pi$, but to avoid problems with periodizing, we take a larger $K$ and $x_j$ in a short interval near $\pi/2$.
Given a subinterval of $[-3,3]$, there is a corresponding interval $[\alpha , \beta]$ of values of $x = \arccos(\lambda/K)$.
Let $S^{\pm} : \R \rightarrow \R$ be the Selberg majorant and minorant for this interval, with Fourier support in $[-1,1]$.
For each $m$, we then let $S_m^{\pm}$ be the scaled majorant or minorant with Fourier support in $[-m,m]$.
Finally, take the trigonometric polynomial
\[
f_m^{\pm}(x) = \sum_{\nu \in \Z} \widehat{S^{\pm}_m}(\nu) e(\nu x) = \sum_{\nu} \widehat{S_m^{\pm}}(\nu) \cos(2\pi \nu x).
\]
Since $\widehat{S_m^{\pm}}(\nu)$ vanishes for $|\nu| > m$, $f_m^{\pm}$ is a polynomial of degree at most $m$ in $\cos(2\pi x)$. Hence it is a polynomial of degree $m$ in $\lambda = K \cos(2 \pi x)$. Explicitly,
\[
f_m^{\pm}(\lambda) = \sum_{\nu} \widehat{S_m^{\pm}}(\nu) T_{\nu}(\lambda/K).
\]
in terms of Chebyshev polynomials $T_{\nu}(\cos{\theta}) = \cos(\nu \theta)$. This can be used to verify that the coefficients are at most exponential in $m$, as claimed in Lemma~\ref{lem:gjs}. For instance, one has the explicit expansion
\[
T_{n}(x) = \frac{\nu}{2} \sum_{k \leq n/2} (-1)^k \frac{(n-k-1)!}{k! (n-2k)!} 2^{n-2k} x^{n-2k}.
\]
The key inequalities that $f_m^+ \geq \chi_I$ and $f_m^{-} \leq \chi_I$ follow from the corresponding properties of $S_m^{\pm}$, together with periodization
\[
f_m^{\pm} = \sum_{n \in \Z} S_m^{\pm}(x+n).
\]
Because we have arranged that the interval of $x$ values has length less than 1, there will only be a single translate $x+n$ in the interval. This is the advantage of choosing a larger $K$ in the change of variables $\lambda = K \cos(2\pi x)$.

\emph{Proof of Corollary \ref{cor:discrepancy}.}
The Markoff eigenvalues lie in $[-3,3]$, and we first rescale so that Lemma~\ref{lem:gjs} applies. Given any subinterval $J$ of $[-3,3]$, let
\[
I = K^{-1} J \subseteq \left[-\frac{1}{\sqrt{2}}, \frac{1}{\sqrt{2}} \right]
\]
where $K = 3\sqrt{2}$.
Let $f_m^{\pm}$ be the polynomials from Lemma~\ref{lem:gjs}, where $m$ will be a small multiple of $\log{p}$, and write
\[
f_m^{\pm}(y) = \sum_{i=0}^m a_{m,i}^{\pm} y^i. 
\]
where $|a_{m,i}^{\pm}| \leq B^m$. We write $\mu_{\infty}$ for the measure with density $\rho_3(x)$ and $\mu_p$ for the eigenvalue counting measure (normalized to have total mass 1).

From Lemma~\ref{lem:gjs}, we have
\begin{equation}
\int f_m^- d\mu_p \leq \mu_p(J) \leq \int f_m^+ d\mu_p.
\label{eqn:above-below}
\end{equation}
By Theorem~\ref{thm:logp},
\[
\int x^i d\mu_p = \int x^i d\mu_{\infty} + O\big( p^{-1} 2^{17 i} \big)
\]
Therefore
\[
\int f_m^{\pm} d\mu_p = \int f_m^{\pm} d\mu_{\infty} + O\left( \sum_{i=0}^m |a_{m,i}^{\pm}| 2^{17i} p^{-1} \right)
\]
Since the coefficients $a_{m,i}^{\pm}$ are at most $B^m$, we have
\[
\int f_m^{\pm} d\mu_p = \int f_m^{\pm} d\mu_{\infty} + O\left((2^{17}B)^m p^{-1} \right)
\]
Therefore we can replace $\mu_p$ by $\mu_{\infty}$ in (\ref{eqn:above-below}):
\[
\int f_m^- d\mu_{\infty} + O\left((2^{17}B)^m p^{-1} \right) \leq \mu_p(J) \leq \int f_m^+ d\mu_{\infty} + O\left((2^{17}B)^m p^{-1} \right).
\]
Using again $f_m^- \leq \chi_J \leq f_m^+$ gives
\[
\int f_m^- d\mu_{\infty} \leq \mu_{\infty}(J) \leq \int f_m^+ d\mu_{\infty}
\]
and since the Kesten-McKay density $\rho_3$ is bounded, Lemma~\ref{lem:gjs} also implies that
\[
\int f_m^+ - f_m^- d\mu_{\infty} \lesssim \frac{1}{m}.
\]
It follows that
\[
|\mu_p(J) - \mu_{\infty}(J) | \lesssim \frac{1}{m} + \frac{(2^{17}B)^m}{p}.
\]
If we choose $m = \lfloor c \log{p} \rfloor$ for a small constant $c > 0$, we obtain
\[
\int f_m^{\pm} d\mu_p = \int f_m^{\pm} d\mu_{\infty} + O\left(p^{-1+c\log(2^{17}B)} \right)
\]
As long as $-1 + c\log(2^{17}B)<0$, this negative power of $p$ can be absorbed in the error $1/m$, which is of order $1/\log{p}$.
Thus
\[
\mu_{p}(J) =\int_{J} \rho_{3}(x) dx+O\left( \frac{1}{\log p} \right)
\]
as required. \qed

\section{Reducing the Markoff group mod $p$} \label{sec:katz?}

Implicit in our claim that a nontrivial word has $O(p)$ fixed points is that it cannot fix all triples $(x,y,z) \in \F_p^3$. In characteristic 0, this follows from the fact that the Markoff group $G = \langle m_1 \rangle * \langle m_2 \rangle * \langle m_3 \rangle$ is a free product. 
We view the Markoff group as polynomial automorphisms of $\Z^3$, and claim reducing a nontrivial element modulo $p$ does not yield the identity polynomial $(x,y,z)$ in characteristic $p$.
Otherwise, it could be written as the identity plus polynomials all of whose coefficients are divisible by $p$.
Such an identity would hold in any field of characteristic $p$, and in particular over $\bar{\F}_p$.
We rule this out as follows.

Let $\overline{G}$ be the subgroup of polynomial transformations of affine space $\A^3(\overline{\F}_p)$ generated by the Markoff moves over an algebraic closure of $\F_p$.
The Markoff moves preserve not only the equation $x^2+y^2+z^2=xyz$, but also any level set $x^2+y^2+z^2=xyz+k$.
In particular, $\overline{G}$ preserves the \emph{Cayley cubic}
\begin{equation} \label{eqn:cayley}
x^2+y^2 +z^2 = xyz + 4.
\end{equation}
The Cayley cubic is special because the action of $\overline{G}$ linearizes after a change of variables. First, let
\begin{align*}
x &= \xi + \xi^{-1} \\
y &= \eta + \eta^{-1}
\end{align*}
where $\xi, \eta \in \overline{\F}_p \setminus \{ 0 \}$.
The solutions of (\ref{eqn:cayley}) for $z$ are then
\[
z_{\pm} = \xi \eta^{\pm 1} + \big( \xi \eta^{\pm 1} \big)^{-1}.
\]
Note that $\xi$ and $\xi^{-1}$ define the same $x$, and similarly the parametrization $\eta \mapsto y$ is two-to-one. 
Suppose that a word $g$ in the Markoff moves acts trivially on the Cayley cubic. We must show $g$ is the identity. We represent $g$ as a matrix $\begin{bmatrix} a & b \\ c & d \end{bmatrix} \in \GL_2(\Z)$ acting on the Cayley cubic by
\[
g(\xi, \eta) = (\xi^a \eta^c, \xi^b \eta^d).
\]
Considering points with $\eta = 1$, the assumption $g(\xi, \eta) = (\xi, \eta)$ implies that $\xi^a = \xi^{\pm 1}$ and $\xi^b = 1$ for all $\xi$ in the algebraic closure $\overline{\F}_p$, not only the finite field $\F_p$. It follows that $a = \pm 1$ and $b = 0$. Likewise, the action on points with $\xi = 1$ implies that $d = \pm 1$ and $c = 0$. Finally, we claim $a=d$, so that $g = \pm I$ is trivial in $\PGL_2(\Z)$. The only other option, up to sign, is to send $(\xi,\eta)$ to $(\xi, \eta^{-1})$. In the original coordinates, this corresponds to $(x,y,z_+) \mapsto (x,y,z_-)$, namely the third Markoff move $m_3$, which certainly does not act trivially. Thus $g$ is the identity. The same argument applies not only to $\overline{\F}_p$ but to any algebraically closed field, and shows that there is no nontrivial relation between the generators $m_1, m_2, m_3$. 
This proves Proposition~\ref{prop:katz}. \qed

We have shown that reduction mod $p$ from $G$ to $\overline{G}$ is injective.  
Equivalently, different words in the Markoff moves define different polynomials, even in characteristic $p$. Note that some of them must coincide as mappings from $\A^3(\F_p)$ to $\A^3(\F_p)$ since there are only finitely many of these, but this will only occur for polynomials of degree larger than $p$.

\section{Conclusion} \label{sec:conc}

We have argued that nontrivial words of length $L$ have at most $p e^{O(L)}$ fixed points, while the identity has $p^2 + O(p)$. 
Thus, for any fixed $L$ or even up to a small multiple of $\log{p}$, the path-count will approximately match what one would get in the process of computing a Kesten-McKay moment. 
The error term $O(p)$ cannot be improved because some words, such as the Markoff moves themselves, do have on the order of $p$ fixed points. There is room for improvement in taking longer words, namely allowing $L$ to be a larger multiple of $\log{p}$.
This would lead to a more refined scale at which the Kesten-McKay holds. 
Beyond the scale $\log{p}$, the Markoff graph no longer resembles a tree in the same statistical sense that we have proved for smaller $L$. To see this, start from the 3-regular tree of integer solutions and reduce mod $p$. There are only $p^2 \pm 3p$ nonzero solutions mod $p$ (and it is not even known whether all of them appear from integer solutions reduced mod $p$). On the other hand, the first $n$ layers in a 3-regular tree comprise
\[
3 \times 2^n - 2
\]
nodes. Once $3 \times 2^n - 2 > p^2+3p$, there must be distinct Markoff triples over $\Z$ that coincide mod $p$. This gives a cycle in $M(\F_p)$ of length at most $2n$ (to the root and back). The same argument produces a loop starting from any solution mod $p$ that lifts to $\Z$, which Bourgain-Gamburd-Sarnak \cite{BGS} prove is the vast majority of them. Thus many cycles of length $4\log_2(p)$ or shorter form as the tree collapses on itself mod $p$.
We would not expect it to be possible to take $L > \frac{4}{\log{2}} \log{p} = (5.77078\ldots ) \log{p}$ and still have agreement with the Kesten-McKay moments. At that scale, if not sooner, loops appear at a positive proportion of the vertices.

The Kesten-McKay law leaves open the question of whether the Markoff graphs are connected for each prime $p$, and the even harder question of whether they form an expander family. The number of connected components of a 3-regular graph is the multiplicity of $\lambda=3$ as an eigenvalue. Corollary~\ref{cor:exceptional-eigs} implies that the number of eigenvalues in an interval $[3-\varepsilon,3]$ is $O(p^2/\log{p})$, which is well short of proving even that the number of components is exactly 1 or even $O(1)$ independent of $p$.
To prove a spectral gap, even if the interval contained a bounded number of eigenvalues, one would need a further argument to rule out some eigenvalues being $3+o(1)$ as $p \rightarrow \infty$. 
The bulk distribution of eigenvalues we have proved here is a coarser property.

\section*{Acknowledgments} 
We thank Seungjae Lee for Figure~\ref{fig:histogram}, the first numerical evidence in favour of the Kesten-McKay law for graphs constructed from the Markoff equation, and many helpful conversations. We thank Peter Sarnak for his encouragement in this project. We thank Nick Katz for raising the question in Section~\ref{sec:katz?}. de Courcy-Ireland's work was supported by the Natural Sciences and Engineering Research Council of Canada through a Postgraduate Scholarships Doctoral grant.

\end{document}